\theoremstyle{plain}
\newtheorem{theorem}{Theorem}[section]
\newtheorem{lemma}[theorem]{Lemma}
\newtheorem{corollary}[theorem]{Corollary}
\theoremstyle{definition}
\theoremstyle{remark}
\newtheorem{remark}{Remark}
\newcommand{\pd}{\hat p}
\newcommand{\qd}{\hat q}
\newcommand{\dx}{\,\mathrm{d}x}
\newcommand{\dt}{\,\mathrm{d}t}
\newcommand{\supp}{\operatorname{supp}}
\newcommand{\Pb}{\mbox{\rm (P)}\xspace}
\newcommand{\uad}{U_{\rm ad}}
\title{New second order sufficient optimality conditions for state constrained parabolic control problems\thanks{The first two authors were partially supported by MCIN/ AEI/10.13039/501100011033 under research project PID2020-114837GB-I00.}}
\author{Eduardo Casas\thanks{Departamento de Matem\'{a}tica Aplicada y Ciencias de la Computaci\'{o}n, E.T.S.I. Industriales y de Telecomunicaci\'on, Universidad de Cantabria, 39005 Santander, Spain
(\texttt{eduardo.casas@unican.es})},
\and Mariano Mateos\thanks{Departamento de Matem\'{a}ticas, Campus de Gij\'on, Universidad de Oviedo, 33203, Gij\'on, Spain(\texttt{mmateos@uniovi.es})},
\and Arnd R\"osch\thanks{Fakult\"at f\"ur Mathematik, Universt\"at Duisburg-Essen, D-45127 Essen, Germany(\texttt{arnd.roesch@uni-due.de})}
}
\date{}
\begin{document}

\maketitle

\begin{abstract}
We study a control problem governed by a semilinear parabolic equation with pointwise control and state constraints imposed at every point of the space-time cylinder. We obtain second order sufficient optimality conditions for local optimality in the sense of $L^2(Q)$. Our results are valid for spatial domains of dimension less than or equal to three.
\end{abstract}

\begin{quote}
\textbf{Keywords:}
optimal control,  semilinear partial differential equations, optimality conditions, state constraints, Borel measures
\end{quote}

\begin{quote}
\textbf{AMS Subject classification: }
35K58, 
35R06, 
49K20  
\end{quote}

\section{Introduction}
\label{S1}
Let us consider a domain $\Omega\subset\mathbb R^n$, $n\leq 3$, with a Lipschitz boundary $\Gamma$. Given a finite horizon $T>0$ we denote $Q=\Omega\times(0,T)$ and $\Sigma = \Gamma\times(0,T)$. In this paper, we investigate  second order sufficient optimality conditions for the control problem
\[
\Pb\qquad \min_{u \in \uad}  J(u):=\int_Q L(x,t,y_u(x,t)) \dx\dt+ \frac{\nu}{2}\int_Q u(x,t)^2\dx\dt,
\]
where $L:Q\times \mathbb{R}\to\mathbb{R}$ is a given function, $\nu > 0$, and
\[
\uad =\{u \in L^\infty(Q): \alpha \le u(x,t) \le \beta\ \text{ and }y_u(x,t)\leq\gamma\text{ for a.a. } (x,t) \in Q\}
\]
with
$\gamma >0$ and $-\infty<\alpha<\beta<+\infty$ if $n=2$ or $n=3$, or  $-\infty\leq\alpha<\beta\leq+\infty$ if $n=1$.

Above $y_u$ denotes the state associated to the control $u$ related by the following semilinear parabolic state equation
\begin{equation}\label{E01}
\left\{\begin{array}{rcll}
\displaystyle\frac{\partial y_u}{\partial t} + A y_u + f(x,t,y_u) &=& u&\mbox{ in }Q,\\
 y_u& =& 0 &\mbox{ on }\Sigma,\\
  y_u(0)& = &y_0&\mbox{ in }\Omega.
  \end{array}
  \right.
\end{equation}
Assumptions on the data $A$, $f$, $y_0$, and $y_d$ are specified in Section \ref{S2}.

The goal of this paper is to obtain some second order sufficient optimality conditions for problem (P).
Although there is an extensive literature about sufficient optimality conditions for pointwise state constrained control problems governed by ordinary differential equations --see e.g. 
\cite{Bonnans-Hermant2009c,Bonnans-Hermant2009a,Bonnans-Hermant2009b,Malanowski2007,Malanowski2008,Malanowski-Maurer-Pickenhain2004,PickenhainTammer1991}
just to cite a few significant references--,
there are few papers  treating this topic for problems governed by partial differential equations.

In the elliptic case, as far as we know, the first work was published in 2000 by Casas, Tr\"oltzsch and Unger \cite{Casas-Troltz-Unger2000}. In this paper, optimality is obtained in the $L^\infty(\Omega)$ sense under a condition of positive-definiteness of the second derivative of the Lagrangian on an appropriately extended cone.
It is well known that the use of extended cones in infinite-dimensional optimization problems is necessary in many circumstances; see the works by  Maurer and Zowe \cite{Maurer-Zowe79},  Malanowski \cite{Malanowski1993} or Dunn \cite{Dunn98}. The extension used in \cite{Casas-Troltz-Unger2000} is based on the one introduced in \cite{Maurer-Zowe79}, combining this approach with a detailed splitting technique. In 2008, Casas, De los Reyes and Tr\"oltzsch \cite{Casas-delosReyes-Troltz-2008-SIOPT} obtained local optimality in the $L^2(\Omega)$ sense. They used a non-extended cone that seems to be optimal. While in the \cite{Casas-Troltz-Unger2000,Casas-delosReyes-Troltz-2008-SIOPT} the authors assumed $\nu>0$, in 2014 Casas and Tr\"oltzsch \cite{Casas-Troltzsch2014} derived sufficient optimality conditions including the case $\nu = 0$ and an objective functional promoting sparsity of the optimal controls. To do this they considered an extension of the cone given in \cite{Casas-delosReyes-Troltz-2008-SIOPT} which was different from the one in \cite{Casas-Troltz-Unger2000}. 

Let us comment now on the existing results for the parabolic case. In 2000, Raymond and Tr\"oltzsch \cite{Raymond-Troltz2000} obtained  a result for $n=1$ using an extended cone in the spirit of \cite{Maurer-Zowe79}. In \cite{Casas-delosReyes-Troltz-2008-SIOPT} optimality conditions were obtained with a non-extended cone, also for $n=1$. In both papers, local optimality was only obtained in the $L^\infty(Q)$ sense and $\nu$ strictly positive was assumed.
In the work at hand, we prove that the second order condition given in \cite{Casas-delosReyes-Troltz-2008-SIOPT} is sufficient for local optimality in the $L^2(Q)$ sense.
The methods used for $n=1$ cannot be applied for dimension $n>1$ because the control-to-state mapping is not continuous from $L^2(Q)$ into $C(\bar Q)$ if $n>1$. This difficulty was overcome by Krumbiegel and Rehberg in 2013  \cite{Krumbiegel-Rehberg2013} for $\nu>0$ and $n=2$ or $3$ by considering an extended cone depending on two parameters $\beta>0$ and $\tau>0$. They proved local optimality in the $L^\infty(Q)$ sense under their second order condition. To this end, a splitting technique similar to the one employed in \cite{Casas-Troltz-Unger2000} is used.
Using a different cone, we give a sufficient second order condition leading to local optimality in the $L^2(Q)$ sense for $n=2$ or $3$ and $\nu>0$; see Theorem \ref{T4.1}. Our extended cone $C^\tau$ is a translation to the parabolic case of the one introduced in \cite{Casas-Troltzsch2014}. It depends on a parameter $\tau>0$ that can be arbitrarily small and it converges in a decreasing way to the non-extended cone $C$ as $\tau$ tends to zero.

The local optimality in $L^2(Q)$ is the main novelty of our work. $L^2(Q)$ local optimality is much stronger than $L^\infty(Q)$ local optimality and it is more useful to study the stability of the optimal control with respect to small perturbations of the data of control problem, to obtain error estimates for the numerical discretization of the problem and to analyze the convergence of the optimization algorithms to solve the problem; see \cite{Casas-Troltzsch2015}. Due to the presence of control constraints, $L^2(Q)$-local optimality implies the $L^p(Q)$ local optimality for every $p\in[1,+\infty]$; see Section \ref{S3}.

A key point in the proof of the second order conditions in \cite{Casas-Troltzsch2014} for the case $\nu = 0$ was the demonstration of the boundedness of the adjoint state despite the fact that it depends on a Borel measure. The reader is also referred to \cite{CMV2014} for the proof of the boundedness of the adjoint state and its use to get error estimates in the numerical approximation of the control problem. We have not been able to prove the corresponding boundedness for the adjoint state in our control problem \Pb. Thus, the derivation of second order conditions for the case $\nu = 0$ under the presence of pointwise state constraints remains open in the parabolic case; see remarks \ref{ZZR2} and \ref{R3}.

We want also to emphasize that our problem suffers the two-norm discrepancy for $n>1$. The reader is referred to the papers by Ioffe \cite{Ioffe1979}, Maurer \cite{Maurer1981}, or  Malanowski \cite{Malanowski1993} for the treatment of this topic in optimization problems or control problems governed by ordinary differential equations. As far as we know, the first reference dealing with this issue in a problem governed by partial differential equations is the 1993 paper by Goldberg and Tr\"oltzsch \cite{Goldberg-Troltz1993}. The reader is referred to the 2012 paper by Casas and Tr\"oltzsch \cite{Casas-Troltzsch2012} for a systematic approach and further references.

The plan of the paper is as follows. In Section \ref{S2} we state precisely the assumptions, recall the regularity results that we are going to use, and obtain the differentiability properties of the Lagrangian. First order optimality conditions are formulated in Section \ref{S3}. The main result of the paper consists in a second order sufficient condition for optimality in the sense of $L^2(Q)$. This is proved in Section \ref{S4}. Finally, in Section \ref{S5}, we comment on the case of bilateral constraints.

%

\section{Assumptions and study of the equations and the functional}\label{S2}

We make the following assumptions on the data of the problem.

(A1) \label{A1} $A$ denotes the elliptic operator
\[
Ay =-\sum_{i,j=1}^n \partial_{x_j}(a_{i,j}(x)\partial_{x_i} y) + \sum_{j = 1}^n b_j(x,t)\partial_{x_j} y,
\]
where $b_j\in L^\infty(Q)$, $a_{i,j}\in L^\infty(\Omega)$, and the uniform ellipticity condition
\begin{equation}\label{E02}
\exists \lambda_A>0 : \lambda_A|\xi|^2\leq \sum_{i,j=1}^n a_{i,j}(x)\xi_i\xi_j\ \mbox{ for all }\xi\in\mathbb R^n\mbox{ and a.a. }x\in \Omega
\end{equation}
holds.

(A2) \label{A2} We assume that $L:Q\times\mathbb R\to\mathbb R$ is a Carath\'eodory function of class $C^2$ with respect to the last variable satisfying the following properties:
\begin{equation}
\left\{\begin{array}{l}
L(\cdot,\cdot,0)\in L^{1}(Q),\vspace{2mm}\\
\displaystyle\forall M>0\ \exists \psi_{L,M}\in L^1(Q) : \left|\frac{\partial L}{\partial y}(x,t,y)\right|\leq \psi_{L,M}(x,t)\ \forall |y|\leq M,\\
\displaystyle\forall M>0\ \exists \Psi_{L,M}\in L^{5/4}(Q) : \left|\frac{\partial^2 L}{\partial y^2}(x,t,y)\right|\leq \Psi_{L,M}(x,t)\ \forall |y|\leq M,\\
\begin{array}{l}\forall \epsilon>0 \text{ and } \forall M>0\ \exists \delta>0 \text{ such that}\vspace{2mm}\\
\displaystyle\left|\frac{\partial^2 L}{\partial y^2}(x,t,y_1)-\frac{\partial^2 L}{\partial y^2}(x,t,y_2)\right|<\epsilon\ \ \forall|y_1|,|y_2|\leq M \text{ with } \ |y_1-y_2|<\delta,\end{array}
\end{array}\right.
\label{E03}
\end{equation}
for almost all $(x,t) \in Q$.

(A3) \label{A3} We assume that $f:Q\times\mathbb R\to\mathbb R$ is a Carath\'eodory function of class $C^2$ with respect to the last variable satisfying the following properties:
\begin{equation}
\left\{\begin{array}{l}
\displaystyle\exists  C_f \in\mathbb R  : \frac{\partial f}{\partial y}(x,t,y)\geq C_f\ \forall y\in\mathbb R,\vspace{2mm}\\
f(\cdot,\cdot,0)\in L^{\pd}(0,T;L^{\qd}(\Omega)) \ \text{ for some } \pd,\qd\geq 2 \text{ with } \frac{1}{\pd} + \frac{n}{2\qd}<1,\vspace{2mm}\\
\displaystyle\forall M>0\ \exists C_{f,M}>0 : \left|\frac{\partial^j f}{\partial y^j}(x,t,y)\right|\leq C_{f,M}\ \forall |y|\leq M\mbox{ and } j=1,2,\vspace{2mm}\\
\begin{array}{l}\forall \epsilon>0 \text{ and } \forall M>0\ \exists \delta>0 \text{ such that}\vspace{2mm}\\
\displaystyle\left|\frac{\partial^2 f}{\partial y^2}(x,t,y_1)-\frac{\partial^2 f}{\partial y^2}(x,t,y_2)\right|<\epsilon\ \ \forall|y_1|,|y_2|\leq M \text{ with } \ |y_1-y_2|<\delta,\end{array}
\end{array}\right.
\label{E04}
\end{equation}
for almost all $(x,t) \in Q$.
Examples of functions $f$ satisfying the above assumptions are the polynomials of odd degree with positive leading coefficients or the exponential function $f(x,t,y)= g(x,t)\mathrm{exp}(y)$ with $g\in L^\infty(Q)$, $g(x,t)\geq 0$ for almost all $(x,t) \in Q$.

(A4)\label{A4} For the initial datum we assume
$y_0 \in C_0(\Omega)$,  where $C_0(\Omega)$ denotes the space of continuous function in $\bar\Omega$ vanishing on $\Gamma$. We also assume that  $y_0(x)< \gamma$ for all $x\in\Omega$.

(A5) \label{A5} In the rest of the paper $p$ is a fixed number such that
$p > 1+ \frac{n}{2}$ if $n = 2$ or 3, and $p \ge 2$ if $n = 1$.

Next, we collect several results concerning the regularity properties of the solutions of the equations involved in this problem.

\begin{theorem}\label{T2.1}
For every $u\in L^{p}(Q)$ there exists a unique solution $y_u$ of \eqref{E01}, belonging to the space $L^2(0,T;H^1_0(\Omega))\cap C(\bar Q)$. Moreover, there exist a monotone non-decreasing function $\eta:[0,\infty)\to [0,+\infty)$ and a positive constant $K$ such that
\begin{align*}
 &\|y_u\|_{C(\bar Q)}\leq  \eta(\|u\|_{L^{p}(Q)}+\|f(\cdot,\cdot, 0)\|_{L^{\pd}(0,T;L^{\qd}(\Omega))} + \|y_0\|_{C(\bar\Omega)} ),\\
&\|y_u\|_{L^2(0,T;H^1_0(\Omega))}\leq  K(\|u\|_{L^{2}(Q)}+\|f(\cdot,\cdot, 0)\|_{L^{2}(0,T;L^{2}(\Omega))} + \|y_0\|_{L^2(\Omega)} ).
\end{align*}
Moreover, for all $R>0$ there exists $C_{p,R}>0$ such that $\|y_u-y_{\bar u}\|_{C(\bar Q)}\leq  C_{p,R}\|u-\bar u\|_{L^{p}(Q)}$ for all $u,\bar u\in B_R(0)\subset L^p(Q)$.
Finally, if $u_k\rightharpoonup u$ weakly in  $L^{p}(Q)$, then the strong convergence \[
\|y_{u_k}-y_u\|_{C(\bar Q)}+\|y_{u_k}-y_u\|_{L^2(0,T;H^1_0(\Omega))} \to 0\]
holds.
\end{theorem}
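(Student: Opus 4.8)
The plan is to derive all four assertions from two ingredients: a linear parabolic regularity result, and the one-sided monotonicity built into (A3), which reduces the nonlinear problem to the linear one at each step.

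First I would record the base linear estimate. For the shifted linear problem $\partial_t y + Ay + C_f y = g$, $y|_\Sigma = 0$, $y(0)=y_0$, the dissipativity produced by the shift $C_f$ together with the ellipticity \eqref{E02} yields a unique solution in $L^2(0,T;H^1_0(\Omega))\cap C([0,T];L^2(\Omega))$ with the energy bound that gives the second displayed inequality. The continuity $y\in C(\bar Q)$ with an estimate of the form $\|y\|_{C(\bar Q)}\le C(\|g\|_{L^{\pd}(0,T;L^{\qd}(\Omega))}+\|y_0\|_{C(\bar\Omega)})$ follows from the standard $L^{\pd}(L^{\qd})\to C(\bar Q)$ regularity for parabolic equations with bounded measurable coefficients, valid whenever $\frac{1}{\pd}+\frac{n}{2\qd}<1$; by (A5) this is met with $\pd=\qd=p$, since $p>1+\frac n2$ is exactly $\frac1p+\frac{n}{2p}<1$, and $L^p(Q)=L^p(0,T;L^p(\Omega))$ by Fubini.

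Then I would treat the semilinear equation by writing $f(x,t,y)=C_f y + g_0(x,t,y)$ with $g_0$ nondecreasing in $y$ by (A3), so the full operator is a monotone perturbation of the linear one. Existence follows by a monotone-operator/Schauder argument: truncate $f$ at level $M$, solve the truncated problem, and remove the truncation using the uniform $C(\bar Q)$ bound; uniqueness follows by subtracting two solutions, testing with their difference, and exploiting that the contribution of $g_0$ has the right sign. The first two displayed estimates, and hence $M_\infty$, come from inserting $g=u-f(\cdot,\cdot,0)$ into the linear estimate together with $|f(x,t,y)|\le |f(x,t,0)|+C_{f,M}|y|$; the function $\eta$ is assembled from the regularity constants, and $M_\infty$ is finite because $\uad$ is bounded in $L^p(Q)$ (uniformly, since $\alpha,\beta$ are finite when $n=2,3$, while $p\ge 2$ suffices when $n=1$). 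For the Lipschitz bound I would set $z=y_u-y_{\bar u}$, use the mean value theorem to write $f(\cdot,\cdot,y_u)-f(\cdot,\cdot,y_{\bar u})=a(x,t)z$ with $C_f\le a\le C_{f,M_\infty}$, and apply the linear $L^p\to C(\bar Q)$ estimate to the equation solved by $z$ with right-hand side $u-\bar u$.

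The delicate point is the weak-to-strong convergence, and the main obstacle is upgrading weak convergence of the controls to strong convergence of the states in $C(\bar Q)$. I would argue by compactness: if $u_k\rightharpoonup u$ in $L^p(Q)$ then $\{u_k\}$ is bounded, so by the bounds already proven $\{y_{u_k}\}$ is bounded in $C(\bar Q)\cap L^2(0,T;H^1_0(\Omega))$, whence $\{f(\cdot,\cdot,y_{u_k})\}$ is bounded in $L^{\pd}(0,T;L^{\qd}(\Omega))$. Feeding the bounded right-hand side $u_k-f(\cdot,\cdot,y_{u_k})$ back into the linear theory places $\{y_{u_k}\}$ in a space that embeds compactly into $C(\bar Q)$, so a subsequence converges strongly in $C(\bar Q)$ to some $\bar y$; then $f(\cdot,\cdot,y_{u_k})\to f(\cdot,\cdot,\bar y)$ and, passing to the limit using $u_k\rightharpoonup u$, $\bar y$ solves \eqref{E01} with control $u$, so $\bar y=y_u$ by uniqueness and the whole sequence converges by the subsequence principle. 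Finally, the strong $L^2(0,T;H^1_0(\Omega))$ convergence follows from an energy estimate on $z_k=y_{u_k}-y_u$: since $z_k(0)=0$, testing its equation with $z_k$ lets the ellipticity control $\|z_k\|^2_{L^2(0,T;H^1_0(\Omega))}$ modulo lower-order $\|z_k\|^2_{L^2(Q)}$ terms (from the first-order coefficients and from $\int_Q a_k|z_k|^2\ge C_f\|z_k\|^2_{L^2(Q)}$), all of which vanish because $z_k\to 0$ in $C(\bar Q)$, while $\int_Q(u_k-u)z_k\to 0$ since $u_k-u$ is bounded in $L^p(Q)$ and $z_k\to 0$ in the dual norm.
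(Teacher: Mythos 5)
Your overall architecture reproduces the one behind the paper's proof: the paper itself does not prove Theorem \ref{T2.1} but refers to \cite[Theorem 2.1]{Casas-Kunisch2022} and \cite[Theorem 2.1]{CM-SIOPT-2020}, whose arguments rest on exactly the tools you invoke --- the linear $L^{\hat p}(0,T;L^{\hat q}(\Omega))\to C(\bar Q)$ and H\"older estimates of \cite{LSU,DER2017}, the monotone structure of $f$ from (A3), mean-value linearization for the Lipschitz bound, and compactness plus uniqueness for the weak-to-strong convergence. Your treatment of the Lipschitz estimate and of the final convergence statement (H\"older-norm compactness into $C(\bar Q)$, identification of the limit, subsequence principle, then the energy estimate giving the $L^2(0,T;H^1_0(\Omega))$ convergence) is sound.

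There is, however, a genuine gap at the heart of the existence/boundedness step: the mechanism you give for the uniform $C(\bar Q)$ bound is circular. You assert that the first two estimates ``come from inserting $g=u-f(\cdot,\cdot,0)$ into the linear estimate together with $|f(x,t,y)|\le|f(x,t,0)|+C_{f,M}|y|$''. But $C_{f,M}$ is available only for $|y|\le M$, i.e.\ only after the bound you are trying to establish; and even taking $M=\|y\|_{C(\bar Q)}$, the resulting inequality bounds $\|y\|_{C(\bar Q)}$ by data terms plus a constant --- in no way small --- times $\|y\|_{C(\bar Q)}$ itself, so it cannot be closed. The same defect blocks the removal of the truncation: for $f$ truncated at level $M$, this route only yields a bound that grows with $M$, whereas you need one uniform in $M$. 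The missing idea, and the one used in the proofs the paper cites, is to exploit the \emph{sign}, not the growth, of the nonlinearity: setting $y=e^{\lambda t}w$ with $\lambda=\max\{0,-C_f\}$, the map $w\mapsto \lambda w+e^{-\lambda t}\bigl[f(x,t,e^{\lambda t}w)-f(x,t,0)\bigr]$ is nondecreasing in $w$ and vanishes at $w=0$, so its contribution in the Stampacchia/De Giorgi truncation argument is nonnegative and can simply be discarded; this yields $\|y\|_{C(\bar Q)}\le \eta\bigl(\|u\|_{L^{p}(Q)}+\|f(\cdot,\cdot,0)\|_{L^{\hat p}(0,T;L^{\hat q}(\Omega))}+\|y_0\|_{C(\bar\Omega)}\bigr)$ uniformly in the truncation level, after which your truncation-removal, uniqueness, Lipschitz and compactness arguments all go through. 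A secondary imprecision: your justification of $M_\infty$ for $n=1$ (``$p\ge 2$ suffices'') does not cover the case $\alpha=-\infty$ or $\beta=+\infty$ allowed by the paper, in which $\uad$ is not bounded in $L^p(Q)$ (the state constraint $y_u\le\gamma$ is only one-sided) and only the $\eta$-form of the estimate, not a single constant $M_\infty$, can hold; for $n=2,3$ your boundedness argument is correct.
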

\begin{proof}
For the proof of this theorem the reader is referred to \cite[Theorem 2.1]{Casas-Kunisch2022} and \cite[Theorem 2.1]{CM-SIOPT-2020}. In these papers, the proofs are based on the $L^\infty$ and H\"older estimates for the solutions of parabolic equations; see \cite[Chapter 3]{LSU} and \cite{DER2017}.
\end{proof}

From Theorem \ref{T2.1} we infer that the mapping $G:L^p(Q)\to L^2(0,T;H_0^1(\Omega)) \cap C(\bar Q)$ given by $G(u) = y_u$ is well defined. Let us study its differentiability properties. A proof of the next result can be found in
\cite[Theorem 5.1]{Casas-Troltzsch2012}.

\begin{theorem}\label{T2.2}
  The mapping $G:L^p(Q)\to L^2(0,T;H_0^1(\Omega)) \cap C(\bar Q)$ is of class $C^2$. For $u,v\in L^p(Q)$, the derivative $G'(u)v = z_{u,v}$ is the solution of  the equation
\begin{equation}\label{E05}
\left\{\begin{array}{rcll}
\displaystyle\frac{\partial z}{\partial t} + A z+ \frac{\partial f}{\partial y} (x,t,y_u) z &=& v&\mbox{ in }Q,\\
 z& =& 0 &\mbox{ on }\Sigma,\\
  z(0)& = &0&\mbox{ in }\Omega.
  \end{array}
  \right.
\end{equation}
For $u,v_1,v_2\in L^p(Q)$, the second derivative $G''(u)(v_1,v_2) = w_{u,v_1,v_2}$ is the solution of the equation
\begin{equation}\label{E06}
\left\{\begin{array}{rcll}
\displaystyle\frac{\partial w}{\partial t} + A w+ \frac{\partial f}{\partial y} (x,t,y_u) w &=& -\displaystyle\frac{\partial^2 f}{\partial y^2}(x,t,y_u)z_{u,v_1} z_{u,v_2}  &\mbox{ in }Q,\\
 w& =& 0 &\mbox{ on }\Sigma,\\
  w(0)& = &0&\mbox{ in }\Omega.
  \end{array}
  \right.
\end{equation}
\end{theorem}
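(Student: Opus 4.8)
The plan is to recover Theorem~\ref{T2.2} from the implicit function theorem, by writing the state equation \eqref{E01} as the vanishing of a $C^2$ map whose partial derivative in the state is a linear isomorphism. All the analytic input is already contained in Theorem~\ref{T2.1} and in assumption \eqref{E04}.

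First I would fix the functional-analytic framework. Let me introduce the maximal-regularity space
\[
\mathcal{Y}:=\Big\{z\in L^2(0,T;H^1_0(\Omega))\cap C(\bar Q):\ \partial_t z+Az\in L^p(Q),\ z(0)=0\Big\},
\]
normed by $\|z\|_{\mathcal Y}:=\|\partial_t z+Az\|_{L^p(Q)}$. Applying the linear case of Theorem~\ref{T2.1} (equation \eqref{E01} with $f\equiv 0$ and $y_0=0$) shows that $\partial_t+A:\mathcal Y\to L^p(Q)$ is an isomorphism and that the embedding $\mathcal Y\hookrightarrow C(\bar Q)$ is continuous. Let $\zeta\in L^2(0,T;H^1_0(\Omega))\cap C(\bar Q)$ be the fixed solution of $\partial_t\zeta+A\zeta=0$, $\zeta|_\Sigma=0$, $\zeta(0)=y_0$, which lies in $C(\bar Q)$ by (A4). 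Writing $y=\zeta+z$ and denoting by $\Phi$ the Nemytskii operator $\Phi(y)(x,t):=f(x,t,y(x,t))$, the state $y_u$ is characterized by $z=\tilde y_u\in\mathcal Y$ solving $\Lambda(u,\tilde y_u)=0$, where
\[
\Lambda:L^p(Q)\times\mathcal Y\to L^p(Q),\qquad \Lambda(u,z):=\partial_t z+Az+\Phi(\zeta+z)-u .
\]

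Second, I would verify the two hypotheses of the implicit function theorem. Since $\partial_t z+Az-u$ is affine and continuous, the regularity of $\Lambda$ reduces to that of $\Phi$, viewed through the embedding $\mathcal Y\hookrightarrow C(\bar Q)$; this is the heart of the matter and the only place where the two-norm discrepancy enters. Because every function in $C(\bar Q)$ is bounded, the bounds in \eqref{E04} on $\partial f/\partial y$ and $\partial^2 f/\partial y^2$ over bounded $y$-intervals put $\Phi(\zeta+z)$ and the candidate derivatives $\frac{\partial f}{\partial y}(\cdot,\cdot,\zeta+z)\,h$ and $\frac{\partial^2 f}{\partial y^2}(\cdot,\cdot,\zeta+z)\,h_1h_2$ into $L^\infty(Q)\subset L^p(Q)$, while the uniform continuity of $\partial^2 f/\partial y^2$ in $y$ (last line of \eqref{E04}) is exactly what makes $\Phi''$ continuous; hence $\Phi:C(\bar Q)\to L^p(Q)$, and thus $\Lambda$, is of class $C^2$. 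This differentiability of superposition operators between $C(\bar Q)$ and $L^p(Q)$ is precisely the content of \cite[Theorem~5.1]{Casas-Troltzsch2012}. For the second hypothesis, the partial derivative
\[
\partial_z\Lambda(u,z)h=\partial_t h+Ah+\frac{\partial f}{\partial y}(\cdot,\cdot,\zeta+z)\,h
\]
is an isomorphism from $\mathcal Y$ onto $L^p(Q)$: its coefficient lies in $L^\infty(Q)$ and is bounded below by $C_f$ (sign condition in \eqref{E04}), so the linear theory behind Theorem~\ref{T2.1} yields a unique solution $h\in\mathcal Y$ of $\partial_z\Lambda(u,z)h=v$ with $\|h\|_{C(\bar Q)}\le C\|v\|_{L^p(Q)}$, whence $\|h\|_{\mathcal Y}=\|v-\tfrac{\partial f}{\partial y}(\cdot,\cdot,\zeta+z)\,h\|_{L^p(Q)}$ is controlled by $\|v\|_{L^p(Q)}$ and the inverse is bounded.

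Finally, the implicit function theorem produces a $C^2$ map $u\mapsto\tilde y_u\in\mathcal Y$, so that $G(u)=\zeta+\tilde y_u$ is of class $C^2$ from $L^p(Q)$ into $L^2(0,T;H^1_0(\Omega))\cap C(\bar Q)$, and the formulas for the derivatives follow by implicit differentiation of $\Lambda(u,\tilde y_u)\equiv 0$. Differentiating once and using $\partial_u\Lambda(u,z)v=-v$ shows that $z_{u,v}:=G'(u)v$ solves \eqref{E05}. Since $\Lambda$ is affine in $u$, the mixed and pure $u$-derivatives drop out on differentiating a second time, leaving
\[
\partial_z^2\Lambda(u,\tilde y_u)(z_{u,v_1},z_{u,v_2})+\partial_z\Lambda(u,\tilde y_u)\,G''(u)(v_1,v_2)=0,
\]
and because $\partial_z^2\Lambda(u,z)(h_1,h_2)=\frac{\partial^2 f}{\partial y^2}(\cdot,\cdot,\zeta+z)\,h_1h_2$ this is exactly equation \eqref{E06} for $w_{u,v_1,v_2}=G''(u)(v_1,v_2)$. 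I expect the genuine obstacle to be the $C^2$-regularity of $\Phi$ between the two norms $C(\bar Q)$ and $L^p(Q)$; once that is granted, the isomorphism of $\partial_z\Lambda$ and the derivative computations are a routine application of the implicit function theorem.
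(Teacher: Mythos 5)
Your overall route---recasting \eqref{E01} as $\Lambda(u,z)=0$ between Banach spaces and invoking the implicit function theorem, with the $C^2$ regularity of the superposition operator between $C(\bar Q)$ and $L^p(Q)$ as the key analytic ingredient---is exactly the argument behind the proof the paper relies on: the paper gives no proof of Theorem~\ref{T2.2} itself, but cites \cite[Theorem 5.1]{Casas-Troltzsch2012}, which proceeds this way. Your isomorphism argument for $\partial_z\Lambda$ and the implicit differentiation yielding \eqref{E05} and \eqref{E06} are sound.

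There is, however, one concrete step that fails under the paper's hypotheses: the claim that the bounds in \eqref{E04} put $\Phi(\zeta+z)$ into $L^\infty(Q)\subset L^p(Q)$. Assumption (A3) bounds only the derivatives $\partial^j f/\partial y^j$, $j=1,2$, on bounded $y$-intervals, and such bounds control increments $f(x,t,y)-f(x,t,0)$, not the values of $f$ themselves; for the value at $y=0$ the paper assumes merely $f(\cdot,\cdot,0)\in L^{\hat p}(0,T;L^{\hat q}(\Omega))$ with $1/\hat p+n/(2\hat q)<1$. Since the exponent $p$ of (A5) is fixed independently of $(\hat p,\hat q)$, there is in general no inclusion $L^{\hat p}(0,T;L^{\hat q}(\Omega))\subset L^p(Q)$: for $n=3$, $p=4$, take $f(x,t,y)=y+g(x,t)$ with $g\in L^3(Q)\setminus L^4(Q)$, which satisfies (A3) with $\hat p=\hat q=3$. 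For such $f$ your map $\Lambda$ does not take values in $L^p(Q)$, so the implicit function theorem cannot be applied in the framework you set up. The repair is small but must be made: absorb $f(\cdot,\cdot,0)$ into the affine shift, i.e.\ let $\zeta$ solve $\partial_t\zeta+A\zeta=-f(\cdot,\cdot,0)$ in $Q$, $\zeta=0$ on $\Sigma$, $\zeta(0)=y_0$ (the condition $1/\hat p+n/(2\hat q)<1$ is precisely what guarantees $\zeta\in L^2(0,T;H^1_0(\Omega))\cap C(\bar Q)$ by the linear theory underlying Theorem~\ref{T2.1}), and replace $\Phi(\zeta+z)$ by the Nemytskii operator of $g(x,t,s):=f\bigl(x,t,\zeta(x,t)+s\bigr)-f(x,t,0)$. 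This operator genuinely is $L^\infty(Q)$-valued on $C(\bar Q)$, is $C^2$ with the same derivatives as the ones you compute, and the remainder of your argument then goes through verbatim, producing \eqref{E05} and \eqref{E06} unchanged.
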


The following estimates of $z_{u,v}$ will be useful along the paper.
\begin{lemma}\label{L2.3}For all $u\in L^p(Q)$ there exist constants $C_2$ and $C_p$ such that the following inequalities hold:
\[\|z_{u,v}\|_{L^{10}(Q)}\leq C_2\|v\|_{L^2(Q)}\ \forall v\in L^2(Q),
\mbox{ and }\|z_{u,v}\|_{L^{\infty}(Q)}\leq C_p\|v\|_{L^p(Q)}\ \forall v\in L^p(Q).\]
\end{lemma}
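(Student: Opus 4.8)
The plan is to treat \eqref{E05} as a linear parabolic equation whose zeroth order coefficient $c_u(x,t):=\frac{\partial f}{\partial y}(x,t,y_u(x,t))$ is bounded \emph{uniformly} over $\uad$, and then to read off both estimates from the $L^a$--$L^b$ smoothing properties of the associated semigroup. First I would record that, by the $L^\infty$ bound of Theorem \ref{T2.1}, $\|y_u\|_{C(\bar Q)}\le M_\infty$ for every $u\in\uad$, so assumption (A3) yields $C_f\le c_u(x,t)\le C_{f,M_\infty}$ for a.a.\ $(x,t)\in Q$. Hence $\{c_u:u\in\uad\}$ is a bounded subset of $L^\infty(Q)$, while the principal part $A$ does not depend on $u$.

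Next, let $S_u(t)=e^{-t(A+c_u)}$ denote the semigroup generated by $-(A+c_u)$ with homogeneous Dirichlet condition, so that by Duhamel's formula the (weak) solution of \eqref{E05} is $z_{u,v}(t)=\int_0^t S_u(t-s)v(s)\,\mathrm ds$, which makes sense for $v\in L^2(Q)$ and for $v\in L^p(Q)$ alike. The key analytic input is the Gaussian (Aronson-type) bound for divergence-form operators with bounded measurable coefficients: combined with the uniform bounds on $a_{i,j}$, $b_j$, $c_u$ and the ellipticity constant $\lambda_A$, it gives, for $1\le a\le b\le\infty$ and $t\in(0,T]$, a smoothing estimate
\[
\|S_u(t)\|_{\mathcal L(L^a(\Omega),L^b(\Omega))}\le C\,t^{-\frac n2\left(\frac1a-\frac1b\right)},
\]
with $C$ depending only on the data above and on $T$, hence independent of $u\in\uad$.

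For the first inequality I would take $a=2$, $b=10$, so that $\frac n2(\frac12-\frac1{10})=\frac n5\le\frac35$ for $n\le3$, and estimate
\[
\|z_{u,v}(t)\|_{L^{10}(\Omega)}\le C\int_0^t (t-s)^{-n/5}\,\|v(s)\|_{L^2(\Omega)}\,\mathrm ds.
\]
Applying the Hardy--Littlewood--Sobolev inequality in the time variable to the kernel $t^{-n/5}$ and the function $s\mapsto\|v(s)\|_{L^2(\Omega)}\in L^2(0,T)$ gives $\|z_{u,v}\|_{L^{10}(Q)}\le C_2\|v\|_{L^2(Q)}$; the exponents match exactly, $\frac1{10}+1=\frac35+\frac12$, so $n=3$ is the critical (endpoint) case, while $n\le2$ leaves room to spare. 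For the second inequality I would take $a=p$, $b=\infty$, obtaining $\|z_{u,v}(t)\|_{L^\infty(\Omega)}\le C\int_0^t(t-s)^{-n/(2p)}\|v(s)\|_{L^p(\Omega)}\,\mathrm ds$; here Hölder's inequality in time is admissible precisely because (A5) forces $\frac{n}{2p}\cdot\frac{p}{p-1}=\frac{n}{2(p-1)}<1$, i.e.\ $t\mapsto t^{-n/(2p)}\in L^{p/(p-1)}(0,T)$, which yields $\|z_{u,v}\|_{L^\infty(Q)}\le C_p\|v\|_{L^p(Q)}$.

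The main obstacle is the rigorous justification of the \emph{uniform} smoothing estimate for $S_u$: one must invoke the heat-kernel bounds for divergence-form operators with merely bounded measurable principal coefficients (and the lower order drift $b_j\partial_{x_j}$, which only produces a harmless factor $e^{\omega T}$), and verify that the constant depends on $c_u$ solely through $\|c_u\|_{L^\infty(Q)}\le C_{f,M_\infty}$. Once this is in hand, everything reduces to the two convolution inequalities above, with the endpoint case $n=3$, $q=10$ handled by the weak-type (Hardy--Littlewood--Sobolev) form of Young's inequality rather than its strong form. I note in passing that the plain energy estimate in $L^\infty(0,T;L^2(\Omega))\cap L^2(0,T;H^1_0(\Omega))$ only reaches $L^{10/3}(Q)$, so the heat-kernel smoothing is genuinely needed to attain the exponent $10$.
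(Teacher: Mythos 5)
Your two estimates are correct, but your route is genuinely different from the paper's. The paper does not prove Lemma \ref{L2.3} internally at all: its entire proof is a citation of Theorem 2.4 and Lemma 2.3 of \cite{Casas-Wachsmuth2022} (estimates for weak solutions of linear parabolic equations, obtained there by energy/truncation-type arguments rather than kernel representations), plus the observation---which coincides with your first step---that $\|y_u\|_{C(\bar Q)}\le M_\infty$ uniformly on $\uad$ by Theorem \ref{T2.1}, so that (A3) bounds the zeroth order coefficient by $C_{f,M_\infty}$ and the constants of the cited estimates can be taken uniform in $u$. You instead reconstruct both inequalities from Gaussian (Aronson) upper bounds for the fundamental solution, Duhamel's formula, and Hardy--Littlewood--Sobolev, respectively H\"older, in the time variable. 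Your exponent bookkeeping is right: $\frac n2\bigl(\frac12-\frac1{10}\bigr)=\frac n5$, the HLS scaling $1+\frac1{10}=\frac35+\frac12$ is exactly the $n=3$ endpoint (where, as you correctly note, strong Young fails and the energy estimate only gives $L^{10/3}(Q)$), and the integrability condition $\frac n{2(p-1)}<1$ for the $L^\infty$ bound is precisely (A5). What your approach buys is a self-contained derivation that makes transparent where the exponents $10$ and $p>1+\frac n2$ come from; what it costs is reliance on heat-kernel machinery for divergence-form operators with merely bounded measurable coefficients and Dirichlet conditions on a Lipschitz domain, which is substantially heavier than the weak-solution arguments behind the results the paper cites.

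One imprecision should be repaired before this counts as a proof. Since $b_j$ and $c_u(x,t)=\frac{\partial f}{\partial y}(x,t,y_u(x,t))$ depend on $t$, the operator family is non-autonomous and there is no semigroup $S_u(t)=e^{-t(A+c_u)}$; the representation must use the two-parameter evolution family (equivalently Aronson's fundamental solution $\Gamma_u(x,t;\xi,s)$), i.e.\ $z_{u,v}(t)=\int_0^t U_u(t,s)v(s)\,\mathrm{d}s$, with the smoothing estimate $\|U_u(t,s)\|_{\mathcal L(L^a(\Omega),L^b(\Omega))}\le C\,(t-s)^{-\frac n2\left(\frac1a-\frac1b\right)}$. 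This is not a fatal objection: Aronson's bounds are proved exactly in this time-dependent, bounded-measurable setting, the bounded drift and potential only modify the constant by a factor controlled by $e^{\omega T}$ with $\omega$ depending on $\|b_j\|_{L^\infty(Q)}$, $C_{f,M_\infty}$ and $|C_f|$, and the Dirichlet kernel is dominated by the free kernel, so the constant depends only on the data, uniformly in $u\in\uad$. But as written the one-parameter semigroup formula is incorrect, and the argument should be restated in terms of $U_u(t,s)$ throughout.
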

These inequalities follow from Theorem 2.3 and Lemma 2.3, respectively, of \cite{Casas-Wachsmuth2022}.
The independence of $u$ follows from Theorem 2.1 and Assumption (A3).

Let $a_0$ be a function in $L^\infty (Q)$. For $(\mu_Q,\mu_\Omega) \in \mathcal{M}(Q) \times \mathcal{M}(\Omega)$, where $\mathcal{M}(Q)$ and $\mathcal{M}(\Omega)$ denote the spaces of real and regular Borel measures in $Q$ and $\Omega$ respectively, we consider the equation
\begin{equation}\label{E07}
\left\{\begin{array}{rcll}
-\displaystyle\frac{\partial \varphi}{\partial t} + A^* \varphi + a_0(x,t)\varphi &=& \mu_Q&\mbox{ in }Q,\\
 \varphi& =& 0 &\mbox{ on }\Sigma,\\
  \varphi(T)& = &\mu_\Omega&\mbox{ in }\Omega.
  \end{array}
  \right.
 \end{equation}

Following \cite[Definition 2.1]{Casas-Kunisch2016}, we say that $\varphi\in L^1(Q)$ is a solution of \eqref{E07} if it satisfies
\begin{equation}\label{E08}
  \int_Q \varphi \left(\frac{\partial y}{\partial t}+Ay +a_0 y\right)\dx\dt = \int_Q y\mathrm{d}\mu_Q + \int_Q y(T)\mathrm{d}\mu_\Omega\ \forall y\in Y,
\end{equation}
where
\[
Y = \left\{y\in L^2(0,T;H^1_0(\Omega)):\ \frac{\partial y}{\partial t}+Ay +a_0 y\in L^\infty(Q),\ y(0)=0\right\}.
\]
Let us notice that $Y \subset C(\bar Q)$, hence the right hand side of \eqref{E08} is well defined.
The following result is proved in \cite[Theorem 2.2]{Casas-Kunisch2016}.

\begin{lemma}\label{L2.4}
 For every $(\mu_Q,\mu_\Omega) \in \mathcal{M}(Q) \times \mathcal{M}(\Omega)$, there exists a unique $\varphi$ solution of \eqref{E07}
that belongs to $L^r(0,T;W^{1,s}(\Omega))$ for all $r,s\in [1,2)$ with $2/r+n/s>n+1$. Moreover, for a constant $C_{r,s}>0$, that may depend on $\|a_0\|_{L^\infty(Q)}$ but is independent of $(\mu_Q,\mu_\Omega)$ the following estimate holds.
\[\|\varphi \|_{L^r(0,T;W^{1,s}(\Omega))}\leq C_{r,s} (\|\mu_{Q}\|_{\mathcal{M}(Q)}+\|\mu_\Omega\|_{\mathcal{M}(\Omega)}).\]
\end{lemma}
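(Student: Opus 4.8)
Since Lemma~\ref{L2.4} is borrowed from \cite{Casas-Kunisch2016}, I would reconstruct it by a transposition (duality) argument, which is the natural device for data that are only measures. The plan splits into uniqueness, a limiting procedure that produces existence, and a single a priori estimate that carries all the regularity; the last of these is the real content.

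\emph{Uniqueness.} If $\varphi_1,\varphi_2$ both solve \eqref{E07} in the sense of \eqref{E08}, their difference $\varphi$ satisfies $\int_Q \varphi\,(\partial_t y + Ay + a_0 y)\dx\dt = 0$ for every $y\in Y$. For an arbitrary $g\in L^\infty(Q)$ the forward problem $\partial_t y + Ay + a_0 y = g$, $y|_\Sigma=0$, $y(0)=0$, has a unique solution, and by the very definition of $Y$ this solution belongs to $Y$. Hence $\int_Q \varphi\,g\dx\dt = 0$ for all $g\in L^\infty(Q)$, and since $\varphi\in L^1(Q)$ this forces $\varphi=0$.

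\emph{Existence and the a priori bound.} I would mollify the data, choosing $\mu_Q^k\in L^2(Q)$ and $\mu_\Omega^k\in L^2(\Omega)$ with $\|\mu_Q^k\|_{L^1(Q)}\le\|\mu_Q\|_{\mathcal M(Q)}$, $\|\mu_\Omega^k\|_{L^1(\Omega)}\le\|\mu_\Omega\|_{\mathcal M(\Omega)}$, and $\mu_Q^k\rightharpoonup\mu_Q$, $\mu_\Omega^k\rightharpoonup\mu_\Omega$ weakly-$*$. For such $L^2$ data the backward problem \eqref{E07} has a classical weak solution $\varphi_k$ in the energy space, which satisfies \eqref{E08} after integration by parts. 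Granting the uniform estimate described below, the family $\{\varphi_k\}$ is bounded in every space $L^r(0,T;W^{1,s}_0(\Omega))$ with $r,s\in[1,2)$ and $2/r+n/s>n+1$; extracting a weakly convergent subsequence and passing to the limit in \eqref{E08} — which is legitimate because the test functions lie in $Y\subset C(\bar Q)$ and the right-hand side is continuous under weak-$*$ convergence of the measures — yields a solution $\varphi$ that inherits the bound.

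\emph{The estimate, which is the heart of the matter.} Fix $r,s$ as above; since $\varphi(t)\in W^{1,s}_0(\Omega)$, the norm is recovered by duality, $\|\varphi\|_{L^r(0,T;W^{1,s}_0)}=\sup\int_0^T\langle\Phi,\varphi\rangle\dt$ over $\Phi\in L^{r'}(0,T;W^{-1,s'}(\Omega))$ of unit norm. For such $\Phi$ I solve the forward problem $\partial_t y + Ay + a_0 y=\Phi$, $y|_\Sigma=0$, $y(0)=0$; testing \eqref{E08} with this $y$ gives $\int_0^T\langle\Phi,\varphi\rangle\dt = \int_Q y\,\mathrm d\mu_Q+\int_\Omega y(T)\,\mathrm d\mu_\Omega\le\|y\|_{C(\bar Q)}\big(\|\mu_Q\|_{\mathcal M(Q)}+\|\mu_\Omega\|_{\mathcal M(\Omega)}\big)$. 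Everything therefore reduces to the forward regularity estimate $\|y\|_{C(\bar Q)}\le C_{r,s}\|\Phi\|_{L^{r'}(0,T;W^{-1,s'})}$. Conjugation turns the hypothesis into $2/r'+n/s'<1$, which is exactly the sharp parabolic threshold for data in $L^{r'}(0,T;W^{-1,s'}(\Omega))$ — equivalently, the divergence of an $L^{r'}(0,T;L^{s'}(\Omega))$ vector field — to produce a continuous solution: via the heat-kernel representation one has $\|\nabla G(\cdot,t)\|_{L^s(\Omega)}\sim t^{-(n+1)/2+n/(2s)}$, which lies in $L^r(0,T)$ precisely when $2/r+n/s>n+1$. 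This boundedness is of the same $L^\infty$/Hölder type for parabolic equations invoked for Theorem~\ref{T2.1}; see \cite{LSU,DER2017}.

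\emph{Main obstacle.} The crux is this forward $C(\bar Q)$ estimate at the sharp exponent, with a constant depending only on $\|a_0\|_{L^\infty(Q)}$ and on $r,s$ but uniform in the (approximated) data, so that the measure masses enter solely through the right-hand side above. A secondary technical point is to certify that the forward solution $y$ is an admissible test function in \eqref{E08} — if necessary by first approximating $\Phi$ by smooth data so that $y\in Y$ — and to justify the spatial integration by parts underlying the pairing $\langle\Phi,\varphi\rangle$. Once the estimate is in hand, the limit passage and uniqueness are routine.
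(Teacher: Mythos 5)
The paper gives no proof of Lemma~\ref{L2.4} at all --- it simply cites \cite[Theorem 2.2]{Casas-Kunisch2016} --- and your transposition/duality reconstruction (approximation of the measures, Green's identity against forward solutions, and the sharp $C(\bar Q)$ estimate for data in $L^{r'}(0,T;W^{-1,s'}(\Omega))$ under $2/r'+n/s'<1$, via \cite{LSU,DER2017}) is essentially the argument of that reference, and it is exactly the device the paper itself uses to prove the weaker Lemma~\ref{L2.5}. Your proposal is correct, with the one cosmetic caveat that the duality representation of the $L^r(0,T;W^{1,s}_0(\Omega))$ norm should be applied to the energy-space approximations $\varphi_k$ (for which membership in $L^r(0,T;W^{1,s}_0(\Omega))$ is already known) rather than to $\varphi$ itself, as your existence step already implicitly arranges.
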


\begin{lemma}\label{L2.5}
Under the same assumptions as in Lemma \ref{L2.4}, we also have that
$\varphi\in L^q(Q)$ for all $q < (n+2)/n$
and there exists a constant $K_q>0$  such that
 \[\|\varphi\|_{L^q(Q)}\leq K_q (\|\mu_Q\|_{\mathcal{M}(Q)}+\|\mu_\Omega\|_{\mathcal{M}(\Omega)}).\]
 \end{lemma}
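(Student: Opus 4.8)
The plan is to prove the estimate by duality, exploiting the very weak formulation \eqref{E08}. The starting observation is that the bound $q<(n+2)/n$ is equivalent, after passing to the conjugate exponent $q'=q/(q-1)$, to $q'>1+\frac{n}{2}$; we may assume $q>1$, the case $q=1$ being already covered by $\varphi\in L^1(Q)$. But $q'>1+\frac{n}{2}$ is precisely the threshold appearing in Assumption (A5) and Theorem \ref{T2.1} that guarantees that $L^{q'}(Q)$ right-hand sides produce continuous solutions of the linear parabolic equation. I would therefore estimate the pairing $\int_Q\varphi\,g$ for $g$ ranging over the dense subspace $L^\infty(Q)\subset L^{q'}(Q)$ and then use the representation $(L^{q'}(Q))^*=L^q(Q)$.

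For $g\in L^\infty(Q)$, let $y_g$ solve the forward problem $\partial_t y+Ay+a_0y=g$ in $Q$, $y=0$ on $\Sigma$, $y(0)=0$ in $\Omega$. Since the right-hand side lies in $L^\infty(Q)$, the linear counterpart of Theorem \ref{T2.1} gives $y_g\in L^2(0,T;H^1_0(\Omega))\cap C(\bar Q)$, and by the definition of $Y$ we have $y_g\in Y$. Testing \eqref{E08} with $y=y_g$ yields
\[
\int_Q\varphi\,g\dx\dt=\int_Q y_g\,\mathrm{d}\mu_Q+\int_\Omega y_g(T)\,\mathrm{d}\mu_\Omega,
\]
whence, using $y_g\in C(\bar Q)$ together with $\|y_g(T)\|_{C(\bar\Omega)}\le\|y_g\|_{C(\bar Q)}$,
\[
\left|\int_Q\varphi\,g\dx\dt\right|\le \|y_g\|_{C(\bar Q)}\,\bigl(\|\mu_Q\|_{\mathcal{M}(Q)}+\|\mu_\Omega\|_{\mathcal{M}(\Omega)}\bigr).
\]

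The crucial ingredient, and the step I expect to be the main obstacle, is the $C(\bar Q)$-estimate $\|y_g\|_{C(\bar Q)}\le C_{q'}\|g\|_{L^{q'}(Q)}$, valid exactly because $q'>1+\frac{n}{2}$. This is the linear version (with bounded zero-order coefficient $a_0$) of the $L^\infty$-bound in Theorem \ref{T2.1}, resting on the $L^\infty$ and H\"older estimates for parabolic equations in \cite[Chapter 3]{LSU}; the lower-order term $a_0\,y$, with $a_0\in L^\infty(Q)$, is harmless and can be absorbed, e.g.\ by a Gronwall argument, so that $C_{q'}$ depends on $\|a_0\|_{L^\infty(Q)}$ but not on $(\mu_Q,\mu_\Omega)$. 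Granting this, the two displays above give $|\int_Q\varphi\,g\dx\dt|\le C_{q'}\bigl(\|\mu_Q\|_{\mathcal{M}(Q)}+\|\mu_\Omega\|_{\mathcal{M}(\Omega)}\bigr)\|g\|_{L^{q'}(Q)}$ for every $g\in L^\infty(Q)$.

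Finally I would pass from this pairing bound to the norm estimate. The linear functional $g\mapsto\int_Q\varphi\,g\dx\dt$ is bounded on the dense subspace $L^\infty(Q)$ of $L^{q'}(Q)$ with norm at most $C_{q'}(\|\mu_Q\|_{\mathcal{M}(Q)}+\|\mu_\Omega\|_{\mathcal{M}(\Omega)})$, hence extends to a bounded functional on $L^{q'}(Q)$. Since $1<q'<\infty$, this functional is represented by an element $\tilde\varphi\in L^q(Q)$; testing against $g\in C_c^\infty(Q)$ and using $\varphi\in L^1(Q)$ shows $\tilde\varphi=\varphi$ a.e., so $\varphi\in L^q(Q)$ with $\|\varphi\|_{L^q(Q)}\le C_{q'}(\|\mu_Q\|_{\mathcal{M}(Q)}+\|\mu_\Omega\|_{\mathcal{M}(\Omega)})$, which is the claim with $K_q=C_{q'}$. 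I remark that for $n=2,3$ one could alternatively combine Lemma \ref{L2.4} with the Sobolev embedding $W^{1,s}(\Omega)\hookrightarrow L^{ns/(n-s)}(\Omega)$ and a mixed-norm inclusion, but that route is not uniform in the dimension (it breaks down for $n=1$, where $q$ may exceed $2$); the duality argument above has the advantage of treating all admissible $n$ at once.
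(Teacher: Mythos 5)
Your proposal is correct and follows essentially the same route as the paper's own proof: a duality argument testing the very weak formulation \eqref{E08} against solutions of the forward problem with right-hand sides in a dense subspace of $L^{q'}(Q)$ (you use $L^\infty(Q)$, the paper uses $C_0^\infty(Q)$), combined with the $C(\bar Q)$-estimate $\|y_g\|_{C(\bar Q)}\le C\|g\|_{L^{q'}(Q)}$ from the parabolic $L^\infty$-theory under the condition $q'>1+\frac{n}{2}$. The only difference is that you spell out the final extension/representation step and the identification $\tilde\varphi=\varphi$, which the paper compresses into ``the result follows from density''; your closing remark about the alternative Sobolev-embedding route also matches the paper's remark following the lemma.
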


\begin{proof}
Consider $f\in C^\infty_0(Q)$, the space of infinitely differentiable functions with compact support in $Q$, and let $y\in L^2(0,T;H^1_0(\Omega))\cap C([0,T];L^2(\Omega))$ be the unique solution of
\[\left\{\begin{array}{rcll}
\displaystyle\frac{\partial y}{\partial t} + A y + a_0(x,t) y &=& f&\mbox{ in }Q,\\
 y& =& 0 &\mbox{ on }\Sigma,\\
  y(0)& = & 0 &\mbox{ in }\Omega.
  \end{array}
  \right.
  \]
  From the classical regularity results in \cite[Chapter 3, \S7]{LSU} and \cite{DER2017} and the condition $q<(n+2)/n$, or equivalently $q'>1+n/2$, we deduce the existence of a constant $K_q$, that may depend on $\|a_0\|_{L^\infty(Q)}$, such that $\|y\|_{C(\bar Q)}\leq K_q \|f\|_{L^{q'}(Q)}$.
 It is clear that $y\in Y$, and from \eqref{E08} we obtain
\begin{align*}
 \int_Q \varphi &  f\dx\dt =   \int_Q \varphi\left(\displaystyle\frac{\partial y}{\partial t} + A y + a_0(x,t) y\right)\dx\dt   =   \int_Q y \mathrm{d}\mu_Q +\int_\Omega y(T) \mathrm{d}\mu_\Omega\\
  \leq  & \|y\|_{C(\bar Q)} (\|\mu_Q\|_{\mathcal{M}(Q)} + \|\mu_\Omega\|_{\mathcal{M}(\Omega)})
  \leq  K_q \|f\|_{L^{q'}(Q)} (\|\mu_Q\|_{\mathcal{M}(Q)} + \|\mu_\Omega\|_{\mathcal{M}(\Omega)}).
\end{align*}
The result follows from the density of $C^\infty_0(Q)$ in $L^{q'}(Q)$.
\end{proof}
\begin{remark}
If $n=2$ or $n=3$, Lemma \ref{L2.5} is a consequence of Lemma \ref{L2.4} and the embedding $L^{q}(0,T;W^{\frac{qn}{n+q}}(\Omega))\hookrightarrow L^q(\Omega)$. For $n=1$, nevertheless, this embedding would only lead to $\varphi\in L^q(Q)$ for all $q<2$; see \cite[Remark 2.4]{Casas-Kunisch2016}.
\end{remark}

 As a consequence of Lemma \ref{L2.3} and the chain rule, we obtain the differentiability properties of $J:L^p(Q) \longrightarrow \mathbb{R}$.
\begin{corollary}\label{C2.6}The functional $J:L^p(Q)\to \mathbb{R}$ is of class $C^2$. For every  $u, v\in L^p(Q)$, its derivatives are given by
\[J'(u)v = \int_Q(\varphi+\nu u)v\dx\dt\]
and
\[J''(u)v^2 = \int_Q\left[\left(\frac{\partial^2 L}{\partial y^2}(x,t,y_u)-\frac{\partial^2 f}{\partial y^2}(x,t,y_u)\varphi\right)z_{u,v}^2+\nu v^2\right]\dx\dt,\]
where $ \varphi\in L^r(0,T;W^{1,s}(\Omega))\cap L^q(Q)$ for all $r,s\in [1,2)$ with $2/r+n/s>n+1$ and all $q<(n+2)/n$ is the solution  of
\[\left\{\begin{array}{rcll}
-\displaystyle\frac{\partial\varphi}{\partial t} + A^* \varphi + \displaystyle\frac{\partial f}{\partial y}(x,t, y_u)\varphi &=&  \displaystyle\frac{\partial L}{\partial y}(x,t, y_u) & \mbox{ in }Q,\\
 \varphi & =& 0&\mbox{ on }\Sigma,\\
 \varphi(\cdot,T) & = & 0&\mbox{ in }\Omega.
  \end{array}
  \right.
\]
\end{corollary}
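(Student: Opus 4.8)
The plan is to write $J = F\circ G + \tfrac{\nu}{2}\|\cdot\|_{L^2(Q)}^2$, where $F(y) = \int_Q L(x,t,y(x,t))\dx\dt$, and to apply the chain rule. The mapping $G:L^p(Q)\to L^2(0,T;H^1_0(\Omega))\cap C(\bar Q)$ is of class $C^2$ by Theorem~\ref{T2.2} and the quadratic control term is smooth, so everything reduces to showing that the superposition functional $F$ is of class $C^2$ on $C(\bar Q)$. First I would verify, using the growth and uniform continuity bounds on $\partial L/\partial y$ and $\partial^2 L/\partial y^2$ in (A2), that on each ball $\{\|y\|_{C(\bar Q)}<M\}$ the Nemytskii operators $y\mapsto \partial L/\partial y(\cdot,\cdot,y)$ and $y\mapsto\partial^2 L/\partial y^2(\cdot,\cdot,y)$ are continuous into $L^1(Q)$ and $L^{5/4}(Q)$ respectively; since differentiability is local and every $y_u$ lies in some such ball by Theorem~\ref{T2.1}, this yields
\[
F'(y)h = \int_Q \frac{\partial L}{\partial y}(x,t,y)h\dx\dt, \qquad F''(y)h^2 = \int_Q \frac{\partial^2 L}{\partial y^2}(x,t,y)h^2\dx\dt.
\]
Composing with $G$ and its derivatives from Theorem~\ref{T2.2} then gives
\[
J'(u)v = \int_Q \frac{\partial L}{\partial y}(x,t,y_u)z_{u,v}\dx\dt + \nu\int_Q uv\dx\dt
\]
and
\[
J''(u)v^2 = \int_Q \frac{\partial^2 L}{\partial y^2}(x,t,y_u)z_{u,v}^2\dx\dt + \int_Q\frac{\partial L}{\partial y}(x,t,y_u)w_{u,v,v}\dx\dt + \nu\int_Q v^2\dx\dt.
\]

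Next I would introduce the adjoint state. Because $\partial L/\partial y(\cdot,\cdot,y_u)$ is bounded by $\psi_{L,M}\in L^1(Q)$ and $\partial f/\partial y(\cdot,\cdot,y_u)\in L^\infty(Q)$ by (A3), the adjoint equation is the special case of \eqref{E07} with $\mu_Q = \partial L/\partial y(\cdot,\cdot,y_u)\dx\dt$, $\mu_\Omega=0$, and $a_0 = \partial f/\partial y(\cdot,\cdot,y_u)$; Lemmas~\ref{L2.4} and \ref{L2.5} then furnish a unique $\varphi$ with the announced regularity. The heart of the argument is the duality identity obtained from the variational characterization \eqref{E08}: testing the adjoint equation against $z_{u,v}$, which solves \eqref{E05}, gives
\[
\int_Q \frac{\partial L}{\partial y}(x,t,y_u)z_{u,v}\dx\dt = \int_Q \varphi\, v\dx\dt,
\]
so that $J'(u)v = \int_Q(\varphi+\nu u)v\dx\dt$. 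For the second derivative I would apply the same duality with test function $w_{u,v,v}$, which solves \eqref{E06}; since the right-hand side of \eqref{E06} is $-\partial^2 f/\partial y^2(\cdot,\cdot,y_u)z_{u,v}^2$, identity \eqref{E08} yields
\[
\int_Q \frac{\partial L}{\partial y}(x,t,y_u)w_{u,v,v}\dx\dt = -\int_Q \varphi\,\frac{\partial^2 f}{\partial y^2}(x,t,y_u)z_{u,v}^2\dx\dt,
\]
and substituting this into the expression for $J''(u)v^2$ produces exactly the claimed formula.

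The main obstacle is the integrability bookkeeping forced by the low regularity of $\varphi$, together with the rigorous justification of the two applications of \eqref{E08}. For the products to be meaningful I would invoke Lemma~\ref{L2.3}: $z_{u,v}\in L^{10}(Q)$ gives $z_{u,v}^2\in L^5(Q)$, so Hölder's inequality pairs it with $\partial^2 L/\partial y^2\in L^{5/4}(Q)$ to produce an $L^1(Q)$ integrand, and it pairs $\varphi$ with $\partial^2 f/\partial y^2\,z_{u,v}^2\in L^5(Q)$ (the latter because $\partial^2 f/\partial y^2$ is bounded) using $\varphi\in L^{5/4}(Q)$, an inclusion valid since $5/4<(n+2)/n$ for all $n\le 3$. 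Finally, the test-function space $Y$ in \eqref{E08} requires $\partial_t y + Ay + a_0y\in L^\infty(Q)$, which holds for $z_{u,v}$ and $w_{u,v,v}$ only when the data are bounded; for general $v\in L^p(Q)$ I would argue by density, approximating $v$ by $L^\infty(Q)$ functions, applying \eqref{E08} at each step, and passing to the limit using the continuous dependence of $z_{u,v}$ and $w_{u,v,v}$ on $v$ guaranteed by Theorem~\ref{T2.2} and Lemma~\ref{L2.3}.
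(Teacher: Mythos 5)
Your proposal is correct and follows essentially the approach the paper intends: the paper gives no detailed proof of Corollary \ref{C2.6}, presenting it simply as ``a consequence of Lemma \ref{L2.3} and the chain rule,'' and your argument---splitting $J = F\circ G + \frac{\nu}{2}\|\cdot\|^2_{L^2(Q)}$, applying Theorem \ref{T2.2} and the $C^2$ property of the Nemytskii functional under (A2), then converting the resulting expressions via the duality identity \eqref{E08} for the adjoint state furnished by Lemmas \ref{L2.4} and \ref{L2.5}---is exactly that sketch carried out in full. Your additional care with the integrability pairings ($z_{u,v}\in L^{10}(Q)$ from Lemma \ref{L2.3}, $\varphi\in L^{5/4}(Q)$ since $5/4<(n+2)/n$, and $\varphi\in L^{p'}(Q)$ since $p'<(n+2)/n$ by (A5)) and the density step needed because the test space $Y$ in \eqref{E08} requires bounded right-hand sides correctly fills in the details the paper leaves implicit.
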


Now, we  introduce the Lagrangian function associated with problem \Pb:
\begin{align*}
&\mathcal{L}:L^p(Q) \times \mathcal{M}(Q) \times \mathcal{M}(\Omega) \longrightarrow \mathbb{R}\\
&\mathcal{L}(u,\mu_Q,\mu_\Omega) = J(u) + \int_Q (y_u-\gamma) \mathrm{d}\mu_Q + \int_\Omega (y_u(\cdot,T)-\gamma) \mathrm{d}\mu_\Omega,
\end{align*}

\begin{corollary}\label{C2.7}The Lagrangian function $\mathcal{L}$ is of class $C^2$. For every $u, v \in L^p(Q)$ and $\mu =(\mu_Q,\mu_\Omega) \in \mathcal{M}(Q) \times \mathcal{M}(\Omega)$, its derivatives are given by
\[\frac{\partial \mathcal{L}}{\partial u}(u,\mu_Q,\mu_\Omega)v = \int_Q(\varphi_{u,\mu}+\nu u)v\dx\dt\]
and
\[\frac{\partial^2 \mathcal{L}}{\partial u^2}(u,\mu_Q,\mu_\Omega)v^2 = \int_Q\left[\left(\frac{\partial^2L}{\partial y^2}(x,t, y_u) -\frac{\partial^2 f}{\partial y^2}(x,t,y_u)\varphi_{u,\mu}\right)z_{u,v}^2+\nu v^2\right]\dx\dt,\]
where $\varphi_{u,\mu}\in L^r(0,T;W^{1,s}(\Omega))\cap L^q(Q)$ for all $r,s\in [1,2)$ with $2/r+n/s>n+1$ and all $q<(n+2)/n$ is the solution of
\begin{equation}\label{E09}
\left\{\begin{array}{rcll}
-\displaystyle\frac{\partial\varphi}{\partial t} + A^* \varphi + \displaystyle\frac{\partial f}{\partial y}(x,t, y_u)\varphi &=&  \displaystyle\frac{\partial L}{\partial y}(x,t, y_u) + \mu_Q& \mbox{ in }Q,\\
 \varphi& =& 0&\mbox{ on }\Sigma,\\
 \varphi(\cdot,T) & = & \mu_\Omega&\mbox{ in }\Omega.
  \end{array}
  \right.
\end{equation}
\end{corollary}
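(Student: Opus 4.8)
The plan is to build on two results already available: $J$ is of class $C^2$ (Corollary \ref{C2.6}) and the control-to-state map $G$ is of class $C^2$ from $L^p(Q)$ into $C(\bar Q)$ (Theorem \ref{T2.2}). Since
\[\mathcal{L}(u,\mu_Q,\mu_\Omega) = J(u) + \int_Q(y_u-\gamma)\,\mathrm{d}\mu_Q + \int_\Omega(y_u(\cdot,T)-\gamma)\,\mathrm{d}\mu_\Omega,\]
the two measure terms are compositions of $G$ with the bounded linear functionals $y\mapsto\int_Q(y-\gamma)\,\mathrm{d}\mu_Q$ on $C(\bar Q)$ and $y\mapsto\int_\Omega(y(\cdot,T)-\gamma)\,\mathrm{d}\mu_\Omega$, the latter factoring through the bounded evaluation $y\mapsto y(\cdot,T)$ from $C(\bar Q)$ into $C(\bar\Omega)$. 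A composition of a $C^2$ map with a bounded linear one is $C^2$, and the dependence on $(\mu_Q,\mu_\Omega)$ is affine, so $\mathcal{L}$ is of class $C^2$.

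Differentiating directly, with $G'(u)v=z_{u,v}$ and $G''(u)v^2=w_{u,v,v}$, I get
\[\frac{\partial\mathcal{L}}{\partial u}(u,\mu)v = \int_Q\frac{\partial L}{\partial y}(x,t,y_u)z_{u,v}\dx\dt + \nu\int_Q uv\dx\dt + \int_Q z_{u,v}\,\mathrm{d}\mu_Q + \int_\Omega z_{u,v}(\cdot,T)\,\mathrm{d}\mu_\Omega.\]
The key point is that $\varphi_{u,\mu}$ solving \eqref{E09} is an instance of \eqref{E07} with $a_0=\frac{\partial f}{\partial y}(x,t,y_u)\in L^\infty(Q)$ and right-hand side datum $\frac{\partial L}{\partial y}(x,t,y_u)\dx\dt+\mu_Q$, whose first summand lies in $L^1(Q)$ by (A2); hence the weak formulation \eqref{E08} holds with this data. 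When $v\in L^\infty(Q)$ the linearized state $z_{u,v}$ belongs to $Y$, because its defining right-hand side $v$ is then bounded and $z_{u,v}(0)=0$. Testing \eqref{E08} with $y=z_{u,v}$ and using \eqref{E05}, the three terms $\int_Q\frac{\partial L}{\partial y}z_{u,v}\dx\dt+\int_Q z_{u,v}\,\mathrm{d}\mu_Q+\int_\Omega z_{u,v}(\cdot,T)\,\mathrm{d}\mu_\Omega$ collapse to $\int_Q\varphi_{u,\mu}v\dx\dt$, which gives the stated first-order formula for $v\in L^\infty(Q)$.

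The second derivative is handled by the same single duality, and here it works directly for every $v\in L^p(Q)$. Differentiating once more produces, besides $\int_Q\frac{\partial^2 L}{\partial y^2}(x,t,y_u)z_{u,v}^2\dx\dt+\nu\int_Q v^2\dx\dt$, the group $\int_Q\frac{\partial L}{\partial y}w_{u,v,v}\dx\dt+\int_Q w_{u,v,v}\,\mathrm{d}\mu_Q+\int_\Omega w_{u,v,v}(\cdot,T)\,\mathrm{d}\mu_\Omega$. Now $w_{u,v,v}\in Y$ for all $v\in L^p(Q)$ because its right-hand side $-\frac{\partial^2 f}{\partial y^2}(x,t,y_u)z_{u,v}^2$ is bounded, by (A3) together with the $L^\infty$ bound on $z_{u,v}$ from Lemma \ref{L2.3}. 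Testing \eqref{E08} with $y=w_{u,v,v}$ and using \eqref{E06}, this whole group equals $-\int_Q\varphi_{u,\mu}\frac{\partial^2 f}{\partial y^2}z_{u,v}^2\dx\dt$, and substituting yields exactly the claimed second-order expression. Finiteness of the last integral is where the $L^{10}(Q)$ bound of Lemma \ref{L2.3} enters: $z_{u,v}^2\in L^5(Q)$ pairs with $\varphi_{u,\mu}\in L^q(Q)$ for some $q\ge 5/4$, which is admissible since $5/4<(n+2)/n$ for $n\le 3$.

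I expect the only genuine obstacle to be the passage from $v\in L^\infty(Q)$ to arbitrary $v\in L^p(Q)$ in the first-order identity, which is needed because $z_{u,v}\in Y$ is guaranteed only for bounded $v$. This is settled by density: the left-hand side is a bounded linear functional of $v$, while the right-hand side is $L^p(Q)$-continuous provided $\varphi_{u,\mu}+\nu u\in L^{p'}(Q)$. This integrability is precisely what (A5) secures, since $p>1+\frac{n}{2}$ forces $p'<(n+2)/n$, so Lemma \ref{L2.5} yields $\varphi_{u,\mu}\in L^q(Q)\subset L^{p'}(Q)$ for a suitable $q<(n+2)/n$; combined with $u\in L^\infty(Q)$, the identity extends by continuity to all $v\in L^p(Q)$.
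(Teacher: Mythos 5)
Your proposal is correct and takes essentially the same route the paper intends: the corollary is stated there without a separate proof precisely because it follows from Theorem \ref{T2.2} and Corollary \ref{C2.6} via the chain rule, combined with the measure-valued adjoint formulation \eqref{E08} and Lemmas \ref{L2.4}--\ref{L2.5}, which is exactly the duality-plus-density argument you spell out. One cosmetic slip: in your final density step $u$ is only assumed in $L^p(Q)$, not $L^\infty(Q)$, but since $p\ge 2$ and $Q$ is bounded one has $u\in L^{p'}(Q)$, so the continuity of $v\mapsto\int_Q(\varphi_{u,\mu}+\nu u)v\dx\dt$ on $L^p(Q)$ and hence your conclusion are unaffected.
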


The following result is an immediate consequence of Lemma \ref{L2.4}, Lemma \ref{L2.5} and Theorem \ref{T2.1}.

\begin{lemma}\label{L2.8}
For any $u\in L^p(Q)$ and $(\mu_Q,\mu_\Omega) \in \mathcal{M}(Q) \times \mathcal{M}(\Omega)$  we have that $\varphi_{u,\mu}\in L^r(0,T;W^{1,s}(\Omega)) \cap L^q(Q)$ for all $r,s\in [1,2)$ with $2/r+n/s>n+1$ and all $q<(n+2)/n$. Moreover, there exists a constant $M_q$ depending continuously on $\|y_u\|_{L^\infty(Q)}$ such that the following estimate holds:
\[
\|\varphi_{u,\mu}\|_{L^q(\Omega)}\leq M_q(1 + \|\mu_Q\|_{\mathcal{M}(Q)} + \|\mu_\Omega\|_{\mathcal{M}(\Omega)}).
\]
\end{lemma}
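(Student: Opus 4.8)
The plan is to prove Lemma \ref{L2.8} by assembling the pieces that are already available, so the work is essentially one of decomposition and bookkeeping. The adjoint equation \eqref{E09} for $\varphi_{u,\mu}$ differs from the measure-only equation \eqref{E07} only in its source term: on the right-hand side we have $\frac{\partial L}{\partial y}(x,t,y_u)+\mu_Q$ in $Q$, while the terminal datum is $\mu_\Omega$, and the zeroth-order coefficient is $a_0=\frac{\partial f}{\partial y}(x,t,y_u)$. The key observation is that the $\mathbb R$-valued source $\frac{\partial L}{\partial y}(x,t,y_u)$ can itself be regarded as an absolutely continuous measure contributing to $\mu_Q$, or alternatively the solution can be split as $\varphi_{u,\mu}=\varphi_1+\varphi_2$, where $\varphi_1$ solves \eqref{E07} with data $(\mu_Q,\mu_\Omega)$ and $\varphi_2$ solves the same adjoint equation with the smooth source $\frac{\partial L}{\partial y}(x,t,y_u)$ and homogeneous terminal datum. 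Linearity of \eqref{E09} in the data makes this splitting exact.

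The first step is to verify that the coefficient $a_0=\frac{\partial f}{\partial y}(\cdot,\cdot,y_u)$ lies in $L^\infty(Q)$ with a bound uniform over $u\in\uad$; this follows from Theorem \ref{T2.1}, which gives $\|y_u\|_{C(\bar Q)}\le M_\infty$, together with the local boundedness of $\frac{\partial f}{\partial y}$ in assumption (A3) applied with $M=M_\infty$. Consequently Lemma \ref{L2.4} and Lemma \ref{L2.5} apply with constants $C_{r,s}$ and $K_q$ that depend only on $\|a_0\|_{L^\infty(Q)}$, hence are uniform in $u$. For the piece $\varphi_1$, Lemma \ref{L2.4} and Lemma \ref{L2.5} directly yield $\varphi_1\in L^r(0,T;W^{1,s}(\Omega))\cap L^q(Q)$ for the stated ranges of $r,s,q$, with $\|\varphi_1\|_{L^q(Q)}\le K_q(\|\mu_Q\|_{\mathcal M(Q)}+\|\mu_\Omega\|_{\mathcal M(\Omega)})$.

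The second step handles $\varphi_2$. Its source $\frac{\partial L}{\partial y}(x,t,y_u)$ is bounded in $L^1(Q)$, uniformly in $u\in\uad$, by the second line of assumption (A2) with $M=M_\infty$, namely $\bigl|\frac{\partial L}{\partial y}(x,t,y_u)\bigr|\le\psi_{L,M_\infty}(x,t)$ with $\psi_{L,M_\infty}\in L^1(Q)$. An $L^1(Q)$ function defines an element of $\mathcal M(Q)$ whose total variation equals its $L^1$ norm, so $\varphi_2$ is covered by exactly the same regularity theory: Lemma \ref{L2.5} gives $\varphi_2\in L^q(Q)$ with $\|\varphi_2\|_{L^q(Q)}\le K_q\|\psi_{L,M_\infty}\|_{L^1(Q)}$, a constant independent of $u$ and of $\mu$. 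Adding the two estimates and absorbing the $u$-independent constant $K_q\|\psi_{L,M_\infty}\|_{L^1(Q)}$ into the leading $1$ yields the claimed bound $\|\varphi_{u,\mu}\|_{L^q(Q)}\le M_q(1+\|\mu_Q\|_{\mathcal M(Q)}+\|\mu_\Omega\|_{\mathcal M(\Omega)})$, with the same reasoning giving membership in $L^r(0,T;W^{1,s}(\Omega))$ via Lemma \ref{L2.4}.

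The only genuine subtlety — and hence the step I would watch most carefully — is ensuring that all constants are truly uniform over $u\in\uad$. This uniformity rests entirely on the $u$-independent sup-norm bound $M_\infty$ for $y_u$ from Theorem \ref{T2.1}, which in turn makes both $\|a_0\|_{L^\infty(Q)}$ and the source estimate $\|\psi_{L,M_\infty}\|_{L^1(Q)}$ independent of $u$; once this is in place, the dependence of $C_{r,s}$ and $K_q$ on $\|a_0\|_{L^\infty(Q)}$ (explicitly permitted in Lemmas \ref{L2.4} and \ref{L2.5}) causes no problem because $\|a_0\|_{L^\infty(Q)}$ itself is uniformly bounded. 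Everything else is a routine application of linearity and the triangle inequality, so the proof is short and the statement is indeed, as the paper notes, an immediate consequence of the three cited results.
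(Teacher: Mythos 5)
Your proof is correct and follows exactly the route the paper intends: the paper states Lemma \ref{L2.8} as an immediate consequence of Lemma \ref{L2.4}, Lemma \ref{L2.5} and Theorem \ref{T2.1} without further detail, and your argument (uniform bound $\|y_u\|_{C(\bar Q)}\le M_\infty$ giving a uniform $L^\infty$ bound on $a_0=\frac{\partial f}{\partial y}(\cdot,\cdot,y_u)$ and a uniform $L^1$ bound $\psi_{L,M_\infty}$ on the source, then treating the $L^1$ term as an absolutely continuous measure and applying the two lemmas) is precisely the bookkeeping that citation compresses. Nothing is missing.
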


One of the keys in the proof of second order sufficient optimality conditions is the continuity of the second derivative of the Lagrangian in an appropriate sense. In the next theorem, we establish the result needed in our case.

\begin{theorem}\label{T2.9}
Consider $\bar u\in L^p(Q)$, $\bar y $ its associated state, and measures $(\bar\mu_Q,\bar\mu_\Omega) \in \mathcal{M}(Q) \times \mathcal{M}(\Omega)$. Then, for every $\varepsilon >0$ there exists $\rho>0 $ such that for all $u\in L^p(Q)$ with $\|y_u-\bar y\|_{C(\bar Q)} <\rho$ the following inequality holds:
\[\left|\frac{\partial^2\mathcal{L}}{\partial u^2}(\bar u, \bar\mu_Q,\bar\mu_\Omega)v^2-\frac{\partial^2\mathcal{L}}{\partial u^2}( u, \bar\mu_Q,\bar\mu_\Omega)v^2\right|\leq \varepsilon \|v\|_{L^2(Q)}^2\quad \forall\ v\in L^2(Q).\]
\end{theorem}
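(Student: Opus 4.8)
The plan is to work directly from the representation of $\frac{\partial^2\mathcal L}{\partial u^2}$ in Corollary \ref{C2.7}. Since the Tikhonov terms $\nu v^2$ are independent of $u$, they cancel in the difference, and writing $\bar z=z_{\bar u,v}$, $z=z_{u,v}$, $\bar\varphi=\varphi_{\bar u,\bar\mu}$, $\varphi=\varphi_{u,\bar\mu}$, and $a_L^{\bar u}=\frac{\partial^2 L}{\partial y^2}(x,t,\bar y)$, $a_L^{u}=\frac{\partial^2 L}{\partial y^2}(x,t,y_u)$, $a_f^{\bar u}=\frac{\partial^2 f}{\partial y^2}(x,t,\bar y)$, $a_f^{u}=\frac{\partial^2 f}{\partial y^2}(x,t,y_u)$, the quantity to be estimated is
\[
I=\int_Q\big(a_L^{\bar u}\bar z^2-a_L^{u}z^2\big)\dx\dt-\int_Q\big(a_f^{\bar u}\bar\varphi\,\bar z^2-a_f^{u}\varphi\,z^2\big)\dx\dt.
\]
Set $M=M_\infty$, so that $\|\bar y\|_{C(\bar Q)},\|y_u\|_{C(\bar Q)}\le M$ by Theorem \ref{T2.1}. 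Then Lemma \ref{L2.3} gives $\|\bar z\|_{L^{10}(Q)},\|z\|_{L^{10}(Q)}\le C_2\|v\|_{L^2(Q)}$, while Lemma \ref{L2.8} bounds $\|\bar\varphi\|_{L^{5/4}(Q)}$ and $\|\varphi\|_{L^{5/4}(Q)}$ by a finite constant depending only on $\bar\mu$ (note $5/4<(n+2)/n$ for $n\le 3$). I also record the pointwise bounds $|a_L^u|\le\Psi_{L,M}\in L^{5/4}(Q)$ and $|a_f^u|\le C_{f,M}$ from (A2), (A3).

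I would telescope each integrand. For the first integral, $a_L^{\bar u}\bar z^2-a_L^uz^2=(a_L^{\bar u}-a_L^u)\bar z^2+a_L^u(\bar z^2-z^2)$, and for the second, $a_f^{\bar u}\bar\varphi\bar z^2-a_f^u\varphi z^2=(a_f^{\bar u}-a_f^u)\bar\varphi\bar z^2+a_f^u(\bar\varphi-\varphi)\bar z^2+a_f^u\varphi(\bar z^2-z^2)$, using $\bar z^2-z^2=(\bar z-z)(\bar z+z)$. The estimate then rests on three building blocks, all uniform in the admissible data. First, the uniform continuity in (A2) and (A3): given $\epsilon_0>0$ there is $\delta>0$ so that $\|y_u-\bar y\|_{C(\bar Q)}<\rho\le\delta$ forces $|a_L^{\bar u}-a_L^u|<\epsilon_0$ and $|a_f^{\bar u}-a_f^u|<\epsilon_0$ pointwise. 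Second, the smallness of $d:=\bar z-z$: subtracting the two copies of \eqref{E05} shows that $d$ solves \eqref{E05} with coefficient $\frac{\partial f}{\partial y}(x,t,\bar y)$ and right-hand side $-\big(\frac{\partial f}{\partial y}(x,t,\bar y)-\frac{\partial f}{\partial y}(x,t,y_u)\big)z$; since $\frac{\partial^2 f}{\partial y^2}$ is bounded by $C_{f,M}$, that right-hand side is bounded in absolute value by $C_{f,M}\rho|z|$, and the uniform $L^2\to L^{10}$ bound of Lemma \ref{L2.3} yields $\|d\|_{L^{10}(Q)}\le C\rho\|v\|_{L^2(Q)}$.

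The third and decisive building block is the smallness of $\psi:=\bar\varphi-\varphi$ in $L^2(Q)$. Subtracting the two instances of \eqref{E09} with the same fixed multiplier $\bar\mu$, the measure data cancel, and $\psi$ solves the adjoint equation with coefficient $\frac{\partial f}{\partial y}(x,t,\bar y)$, zero boundary and terminal data, and the $L^1$-type source
\[
g=\Big(\tfrac{\partial L}{\partial y}(x,t,\bar y)-\tfrac{\partial L}{\partial y}(x,t,y_u)\Big)-\Big(\tfrac{\partial f}{\partial y}(x,t,\bar y)-\tfrac{\partial f}{\partial y}(x,t,y_u)\Big)\varphi.
\]
By the mean value theorem and the bounds above, $|g|\le\rho\,\Psi_{L,M}+C_{f,M}\rho|\varphi|$, whence $\|g\|_{L^{5/4}(Q)}\le C\rho(1+\|\varphi\|_{L^{5/4}(Q)})\le C\rho$. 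The point is that $g$ lies in $L^{5/4}(Q)\subset L^{10/9}(Q)$ rather than in a measure space, so the regularity of $\psi$ is governed by duality: the operator $v\mapsto z_{\bar u,v}$ is bounded $L^2(Q)\to L^{10}(Q)$ by Lemma \ref{L2.3}, and $\psi$ is exactly the image of $g$ under its $L^2$-adjoint, which is therefore bounded $L^{10/9}(Q)\to L^2(Q)$ with the same constant. Hence $\|\psi\|_{L^2(Q)}\le C_2\|g\|_{L^{10/9}(Q)}\le C\|g\|_{L^{5/4}(Q)}\le C\rho$. I expect this step --- obtaining an $L^2$ (not merely a weak, measure-theoretic) bound on the difference of two adjoint states each defined through measure data --- to be the main obstacle, and the cancellation of the measures together with the duality argument is what makes it work.

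Finally I would assemble the estimates by H\"older's inequality, all integrals being over $Q$. The coefficient-difference terms give $|\int(a_L^{\bar u}-a_L^u)\bar z^2|\le\epsilon_0\|\bar z\|_{L^2}^2\le\epsilon_0 C\|v\|_{L^2}^2$ and $|\int(a_f^{\bar u}-a_f^u)\bar\varphi\bar z^2|\le\epsilon_0\|\bar\varphi\|_{L^{5/4}}\|\bar z\|_{L^{10}}^2\le\epsilon_0 C\|v\|_{L^2}^2$. The $z$-difference terms, using $\tfrac45+\tfrac1{10}+\tfrac1{10}=1$, give $|\int a_L^u(\bar z^2-z^2)|\le\|\Psi_{L,M}\|_{L^{5/4}}\|d\|_{L^{10}}\|\bar z+z\|_{L^{10}}\le C\rho\|v\|_{L^2}^2$ and likewise $|\int a_f^u\varphi(\bar z^2-z^2)|\le C_{f,M}\|\varphi\|_{L^{5/4}}\|d\|_{L^{10}}\|\bar z+z\|_{L^{10}}\le C\rho\|v\|_{L^2}^2$. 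The adjoint-difference term, with $\tfrac12+\tfrac12=1$ and $\bar z\in L^{10}(Q)\subset L^4(Q)$, gives $|\int a_f^u\psi\bar z^2|\le C_{f,M}\|\psi\|_{L^2}\|\bar z\|_{L^4}^2\le C\rho\|v\|_{L^2}^2$. Summing, $|I|\le(C\epsilon_0+C\rho)\|v\|_{L^2}^2$; given $\varepsilon>0$, I first fix $\epsilon_0$ (hence $\delta$) so that $C\epsilon_0\le\varepsilon/2$, then take $\rho\le\delta$ small enough that $C\rho\le\varepsilon/2$, which yields the claim.
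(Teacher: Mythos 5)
Your proposal is correct, and its overall architecture coincides with the paper's proof: the same five-term telescoping of the difference, with the coefficient differences $\frac{\partial^2 L}{\partial y^2}$ and $\frac{\partial^2 f}{\partial y^2}$ handled by the uniform-continuity clauses of (A2)/(A3), the difference $z_{\bar u,v}-z_{u,v}$ handled by the mean value theorem applied to the linearized equation plus the uniform $L^2\to L^{10}$ bound of Lemma \ref{L2.3}, and the adjoint-state difference handled by noting that the measures $(\bar\mu_Q,\bar\mu_\Omega)$ cancel, leaving integrable data of size $O(\rho)$. The one genuine divergence is the step you yourself single out as decisive. For $\varphi_{u,\bar\mu}-\varphi_{\bar u,\bar\mu}$ the paper regards the right-hand side $g$ as an element of $\mathcal{M}(Q)$ (via its $L^1$ norm) and invokes Lemma \ref{L2.5} with $q=5/4<(n+2)/n$, getting $\|\varphi_u-\bar\varphi\|_{L^{5/4}(Q)}\leq K_{5/4}\|g\|_{L^1(Q)}\leq C\rho$, and then closes the estimate with H\"older exponents $(5/4,10,10)$; you instead prove an $L^2$ bound by duality against the solution operator $v\mapsto z_{\bar u,v}$ of Lemma \ref{L2.3} and close with exponents $(2,4,4)$. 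Both are transposition arguments (Lemma \ref{L2.5} is itself proved by testing against states with $L^{q'}$ data), so your route is legitimate and even yields a stronger bound on the adjoint difference; the price is that the identity $\int_Q\psi v\dx\dt=\int_Q z_{\bar u,v}\,g\dx\dt$ must be justified from the very weak formulation \eqref{E08}. Two points there deserve explicit care: first, the two adjoint states solve \eqref{E09} with different zeroth-order coefficients ($\frac{\partial f}{\partial y}(x,t,\bar y)$ versus $\frac{\partial f}{\partial y}(x,t,y_u)$), so one must move the coefficient difference into the source before subtracting the weak formulations, checking that the test class $Y$ is the same for both (it is, since $Y\subset C(\bar Q)$ and the coefficients are bounded); second, $\psi$ is a priori only in $L^q(Q)$ for $q<(n+2)/n$, which for $n=3$ excludes $L^2$, so the duality must be run over $v\in L^\infty(Q)$ (ensuring $z_{\bar u,v}\in Y$) followed by a truncation argument to conclude $\psi\in L^2(Q)$ with the stated bound. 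These are routine repairs, exactly parallel to the proof of Lemma \ref{L2.5}, so there is no gap; your variant simply trades the paper's ready-made lemma for a self-contained duality step.
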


Along the proof, and in the rest of the paper, we will denote, $\bar\varphi = \varphi_{\bar u,\bar\mu}$, $\varphi_u = \varphi_{u,\bar\mu}$ and $z_v = z_{\bar u,v}$.

\begin{proof}[Proof of Theorem \ref{T2.9}]
Consider a fixed $\varepsilon>0$ and let $\rho>0$  be as defined in \eqref{E12} below. Set $M_\infty = 1+ \|\bar y\|_{C(\bar Q)}$. Take $u\in L^p(Q)$ such that $\|y_u-\bar y\|_{C(\bar Q)} <\rho$.
We split the difference to be estimated into five parts:

\begin{align*}
  \bigg|\frac{\partial^2\mathcal{L}}{\partial u^2}&(\bar u, \bar\mu_Q,\bar\mu_\Omega)v^2-\frac{\partial^2\mathcal{L}}{\partial u^2}( u, \bar\mu_Q,\bar\mu_\Omega)v^2\bigg|  \\
  \leq  & \int_Q \left| \left(\frac{\partial^2 L}{\partial y^2}(x,t,\bar y) -\frac{\partial^2 f}{\partial y^2}(x,t,\bar y) \bar\varphi \right)z_{v}^2
  \right. \\
   & \qquad \left.
   -
  \left(\frac{\partial^2 L}{\partial y^2}(x,t,y_u)-\frac{\partial^2 f}{\partial y^2}(x,t,y_u)\varphi_u\right)z_{u,v}^2\right| \dx\dt\\
  \leq &
  \int_Q\left|\frac{\partial^2 L}{\partial y^2}(x,t,\bar y)(z_{u,v}^2-z_v^2)\right|\dx\dt \\
  & +  \int_Q\left| \left(\frac{\partial^2 L}{\partial y^2}(x,t,\bar y)-\frac{\partial^2 L}{\partial y^2}(x,t,y_u)\right)z_{u,v}^2\right|\dx\dt \\
  & + \int_Q \left| \frac{\partial^2 f}{\partial y^2}(x,t,\bar y)\bar\varphi \left(z_{u,v}^2-z_v^2\right)\right|\dx\dt \\
  & + \int_Q\left|\left(\frac{\partial^2 f}{\partial y^2}(x,t,y_u)-\frac{\partial^2 f}{\partial y^2}(x,t,\bar y)\right)\bar\varphi z_{u,v}^2\right|\dx\dt\\
  & + \int_Q\left| \frac{\partial^2 f}{\partial y^2}(x,t, y_u) (\varphi_u-\bar\varphi) z_{u,v}^2\right|\dx\dt 
  =
  I_1+I_2+I_3+I_4+I_5.
\end{align*}
Let us estimate each of the summands  $I_i$.
For the first one, applying Theorem \ref{T2.1}, Assumption(A2) and H\"older's inequality we obtain
\[I_1 \leq \|\Psi_{L,M_\infty}\|_{L^{5/4}(Q)}\|z_{u,v}-z_v\|_{L^{10}(Q)}\|z_{u,v}+z_v\|_{L^{10}(Q)}. \]
Using Lemma \ref{L2.3}, we obtain
\begin{equation}\label{E10}\|z_{u,v}+z_v\|_{L^{10}(Q)}\leq 2C_2\|v\|_{L^2(Q)}.\end{equation}

Denote $\eta = z_{u,v}-z_v$. This function satisfies the linear equation
\[
\left\{\begin{array}{rcll}
\displaystyle\frac{\partial \eta}{\partial t} + A \eta+ \frac{\partial f}{\partial y} (x,t,\bar y) \eta &=& \displaystyle\left(\frac{\partial f}{\partial y} (x,t,\bar y)-\frac{\partial f}{\partial y} (x,t,y_u)\right)z_{u,v}&\mbox{ in }Q,\\
 \eta& =& 0 &\mbox{ on }\Sigma,\\
  \eta(0)& = &0&\mbox{ in }\Omega.
  \end{array}
  \right.
\]
By the Mean Value Theorem, there exists a measurable function $\theta:Q\to [0,1]$ such that
\[
\left\{\begin{array}{rcll}
\displaystyle\frac{\partial \eta}{\partial t} + A \eta+ \frac{\partial f}{\partial y} (x,t,\bar y) \eta &=& \displaystyle\frac{\partial^2 f}{\partial y^2} (x,t, y_\theta)(\bar y -y_u)z_{u,v}&\mbox{ in }Q,\\
 \eta& =& 0 &\mbox{ on }\Sigma,\\
  \eta(0)& = &0&\mbox{ in }\Omega,
  \end{array}
  \right.
\]
where $y_\theta = \bar y + \theta(y_u-\bar y)$.
Using Lemma \ref{L2.3}, Assumption (A3), Theorem \ref{T2.1}, and H\"older's inequality we have that
\begin{align}
\|z_{u,v}-z_v\|_{L^{10}(Q)}
\leq &
C_2 C_{f,M_\infty} \|y_u-\bar y\|_{L^\infty(Q)} \|z_{u,v}\|_{L^2(Q)} \nonumber \\
\leq &
|Q|^{2/5}C_2 C_{f,M_\infty}\rho \|z_{u,v}\|_{L^{10}(Q)} \label{E11}
\leq
 |Q|^{2/5}C_2^2C_{f,M_\infty}\rho\|v\|_{L^2(Q)}.
\end{align}
Gathering these estimates, we get
\[I_1\leq  2\|\Psi_{L,M_\infty}\|_{L^{5/4}(Q)} |Q|^{2/5} C_2^3 C_{f,M_\infty} \rho \|v\|_{L^2(Q)}^2  = \bar c_1\rho \|v\|_{L^2(Q)}^2.\]

The estimate for the term $I_3$ follows in a similar way. Indeed, using H\"older's inequality, Assumption (A3), estimates \eqref{E10} and \eqref{E11}, and Lemma \ref{L2.8}, we obtain
\begin{align*}
I_3\leq& C_{f,M_\infty} \|\bar \varphi\|_{L^{5/4}(Q)}\|z_{u,v}-z_v\|_{L^{10}(Q)}\|z_{u,v}+z_v\|_{L^{10}(Q)}\\
\leq & C_{f,M_\infty} M_{5/4}(1 + \|\bar\mu_Q\|_{\mathcal{M}(Q)} + \|\bar\mu_\Omega\|_{\mathcal{M}(\Omega)}) \|z_{u,v}-z_v\|_{L^{10}(Q)} \|z_{u,v}+z_v\|_{L^{10}(Q)}\\
\leq & 2 |Q|^{\frac{2}{5}} C_{f,M_\infty}^2 M_{5/4}(1 + \|\bar\mu_Q\|_{\mathcal{M}(Q)} + \|\bar\mu_\Omega\|_{\mathcal{M}(\Omega)}) C_2^3\rho\|v\|_{L^2(Q)}^2 = \bar c_3\rho \|v\|_{L^2(Q)}^2.
\end{align*}

To estimate $I_5$, we denote $\phi = \varphi_u-\bar\varphi$. This function satisfies the linear equation
\[
\left\{\begin{array}{rl}
-\displaystyle\frac{\partial \phi}{\partial t} + A^* \phi \, + \displaystyle\frac{\partial f}{\partial y} (x,t,\bar y) \phi  = & \displaystyle\frac{\partial L}{\partial y} (x,t,y_u)- \displaystyle\frac{\partial L}{\partial y} (x,t,\bar y)
\\& + \displaystyle\left(\frac{\partial f}{\partial y} (x,t,\bar y)-\frac{\partial f}{\partial y} (x,t,y_u)\right)\varphi_u\mbox{ in }Q,\\
 \phi = & 0 \mbox{ on }\Sigma,\\
  \phi(T) = &0\mbox{ in }\Omega.
  \end{array}
  \right.
\]
Then, noticing that $5/4<(n+2)/n$, we deduce from Lemma \ref{L2.5}, the Mean Value Theorem, assumptions (A2) and (A3) and Lemma \ref{L2.8} that $ \varphi_u-\bar\varphi\in L^{5/4}(Q)$ and
\begin{align*}
\|\varphi_u-&\bar\varphi\|_{L^{5/4}(Q)}\leq K_{5/4}( \|\Psi_{L,M_\infty}\|_{L^1(Q)} + C_{f,M_\infty} \|\varphi_u\|_{L^1(Q)}) \rho\\
& \le K_{5/4}( \|\Psi_{L,M_\infty}\|_{L^1(Q)} + C_{f,M_\infty} M_1 (1 + \|\bar\mu_Q\|_{\mathcal{M}(Q)} + \|\bar\mu_\Omega\|_{\mathcal{M}(\Omega)}) \rho
= \bar C\rho
\end{align*}
 Using H\"older's inequality, Assumption (A3), the above estimate, and {Lemma \ref{L2.3}} we have
\begin{align*}
  I_5\leq  & C_{f,M_\infty} \|\varphi_u-\bar\varphi\|_{L^{5/4}(Q)} \|z_{u,v}\|^2_{L^{10}(Q)}
\leq
 C_{f,M_\infty} \bar C C_2^2\rho\|v\|_{L^2(Q)}^2 = \bar c_5\rho \|v\|_{L^2(Q)}^2.
\end{align*}

To estimate the second term, we use the continuity property of the second derivative of $L$ assumed in (A2). Given
\[\epsilon_2 = \frac{\varepsilon}{5|Q|^{4/5}C_2^2},\]
there exists $\delta_2 >0$ such that, if $\|y_u-\bar y\|_{C(\bar Q)} < \delta_2$, then
\[\|\frac{\partial^2 L}{\partial y^2}(x,t,\bar y)-\frac{\partial^2 L}{\partial y^2}(x,t,y_u)\|_{L^\infty( Q)} < \epsilon_2.
\]
Using this, H\"older's inequality and Lemma \ref{L2.3}, we conclude
\[
I_2\leq
\epsilon_2 \|z_{u,v}\|^2_{L^2(Q)} \leq
\epsilon_2 |Q|^{4/5}\|z_{u,v}\|^2_{L^{10}(Q)} \leq
\epsilon_2 |Q|^{4/5}C_2^2\|v\|^2_{L^{2}(Q)}  =
\frac{\varepsilon}{5}\|v\|^2_{L^{2}(Q)}.
\]

The estimation of $I_4$ is similar. Now we use the continuity property of the second derivative of $f$ with respect to $y$ assumed in (A3). Given
\[\epsilon_4 = \frac{\varepsilon}{5M_{5/4}(1 + \|\bar \mu_Q\|_{\mathcal{M}(Q)} + \|\bar \mu_\Omega\|_{\mathcal{M}(\Omega)}) C_2^2},\]
there exists $\delta_4>0$ such that
\[\|\frac{\partial^2 f}{\partial y^2}(x,t,\bar y)-\frac{\partial^2 f}{\partial y^2}(x,t,y_u)\|_{L^\infty( Q)} < \epsilon_4
\]
if $\|y_u-\bar y\|_{C(\bar Q)} < \delta_4$.
Using that $\bar\varphi\in L^{5/4}(Q)$, lemmas \ref{L2.3} and \ref{L2.8}, and H\"older's inequality, we obtain
\begin{align*}
I_4 = & \int_Q\left|\left(\frac{\partial^2 f}{\partial y^2}(x,t,y_u)-\frac{\partial^2 f}{\partial y^2}(x,t,\bar y)\right)\bar\varphi z_{u,v}^2\right|\dx\dt
\leq  \epsilon_4 \|\bar\varphi\|_{L^{5/4}(\Omega)}\|z_{u,v}\|^2_{L^{10}(\Omega)}\\
\leq &  \epsilon_4 C_2^2 M_{5/4} (1 + \|\bar \mu_Q\|_{\mathcal{M}(Q)} + \|\bar \mu_\Omega\|_{\mathcal{M}(\Omega)}) \|v\|^2_{L^2(\Omega)} = \frac{\varepsilon}{5} \|v\|_{L^2(Q)}^2.
\end{align*}

Putting all five estimates together and taking
\begin{equation}\label{E12}
\rho = \min\left\{\frac{\varepsilon}{5\bar c_1},\delta_2,
\frac{\varepsilon}{5\bar c_3},\delta_4,\frac{\varepsilon}{5\bar c_5},1\right\}
\end{equation}
 we get the desired estimate.
\end{proof}

\begin{remark}\label{ZZR2}
In \cite[Theorem 2.8]{Casas-Troltzsch2014}, the authors prove that $\bar\varphi$, the adjoint state related to a control $\bar u$ that satisfies first order optimality conditions, belongs to $L^\infty(\Omega)$ for all $\nu\geq 0$. Nevertheless, in our case, we have not been able to get such a regularity result. This would allow us to obtain an inequality similar to that of Theorem \ref{T2.9}, but with $\varepsilon \|v\|_{L^2(Q)}^2$ replaced by $\varepsilon \|z_{\bar u,v}\|_{L^2(Q)}^2$, which is the key to write second order sufficient optimality conditions in the case $\nu=0$.
As a consequence, the second order analysis remains an open problem for the case $\nu=0$.
\end{remark}

\section{First order necessary optimality conditions}\label{S3}
Existence of a global solution of \Pb can be proved by standard methods under the assumption of existence of an admissible control $u\in\uad$.

For $s\in [1,+\infty]$, we say that $\bar u\in\uad$ is a local solution of \Pb in the sense of $L^s(Q)$ if there exists $\varepsilon > 0$ such that
\[J(\bar u) \leq J(u)\ \forall u\in\uad\text{ such that }\|u-\bar u\|_{L^s(Q)} \leq  \varepsilon.\]
If $\uad$ is bounded in $L^\infty(Q)$, which is assumed for $n=2$ or $3$, and if $\bar u$ is a local solution in the $L^r(Q)$-sense for some $r\in[1,+\infty)$, then it is also a local solution in the $L^s(Q)$ sense for all $s\in[1,+\infty]$; cf. \cite[Section 5]{Casas-Mateos2017}. However, a local solution in the sense of $L^\infty(Q)$ is not necessarily a local solution in $L^r(Q)$ for $r<+\infty$.
For $n=1$, if $\alpha = -\infty$ or $\beta = +\infty$, if $\bar u$ is a local solution in the $L^{r}(Q)$ sense, then it is a local solution in the $L^s(Q)$ sense for all $r\in[s,+\infty]$.
We simply say that $\bar u\in\uad$ is a local solution of \Pb if it is a local solution in the sense of $L^s(Q)$ for some $s\in[1,+\infty]$.

In the rest of the paper we will often use the notation
\[U_{\alpha,\beta} = \{u\in L^2(Q):\ \alpha\leq u(x,t)\leq\beta\ \text{ for a.e. }(x,t)\in Q\}.\]
Due to our assumptions on $\alpha$ and $\beta$, we have that $U_{\alpha,\beta}\subset L^\infty(Q)$ if $n=2$ or $n=3$, which is crucial to have the differentiability of $\mathcal{L}$ at every point of $\uad$. For $n=1$ the Lagrangian is of class $C^2$ in $L^2(Q)$.
First order conditions can be deduced from \cite[Theorem 5.2]{Casas1993}; see also \cite{Casas-1997}.

\begin{theorem}\label{T3.1}
  Let $\bar u\in\uad$ be a local solution of \Pb.
  Assume that the following linearized Slater condition holds: there exists $u_0\in U_{\alpha,\beta}$ such that
   \begin{equation}\label{E13}
      y_{\bar u}(x,t) + z_{\bar u, u_0-\bar u}(x,t) < \gamma\qquad \forall (x,t)\in \bar Q.
   \end{equation}
  Then, there exist $\bar y \in  L^2(0,T;H^1_0(\Omega))\cap C(\bar Q)$, $\bar\varphi \in L^r(0,T;W^{1,s}(\Omega)) \cap L^q(Q)$ for all $r,s\in [1,2)$ with $2/r+n/s>n+1$ and all $q<(n+2)/n$, and nonnegative measures $\bar\mu_Q\in\mathcal{M}(Q)$ and $\bar \mu_\Omega\in\mathcal{M}(\Omega)$ such that
\begin{equation}\label{E14}
\left\{\begin{array}{rcll}
\displaystyle\frac{\partial \bar y}{\partial t} + A \bar y + f(x,t,\bar y) &=& \bar u&\mbox{ in }Q,\\
 \bar y& =& 0 &\mbox{ on }\Sigma,\\
  \bar y(0)& = &y_0&\mbox{ in }\Omega,
  \end{array}
  \right.
\end{equation}
\begin{equation}\label{E15}
\left\{\begin{array}{rcll}
-\displaystyle\frac{\partial\bar\varphi}{\partial t} + A^* \bar\varphi + \displaystyle\frac{\partial f}{\partial y}(x,t,\bar y)\bar\varphi &=& \displaystyle\frac{\partial L}{\partial y}(x,t,\bar y) +\bar \mu_Q& \mbox{ in }Q,\\
 \bar\varphi& =& 0&\mbox{ on }\Sigma,\\
 \bar\varphi(\cdot,T) & = & \bar\mu_\Omega&\mbox{ in }\Omega,
  \end{array}
  \right.
\end{equation}
\begin{equation}\label{E16}
  \supp \bar\mu_Q\subset\{(x,t)\in Q:\ \bar y(x,t) = \gamma\},\qquad
  \supp \bar\mu_\Omega\subset\{x\in \Omega:\ \bar y(x,T) = \gamma\},
\end{equation}
\begin{equation}\label{E17}
  \int_Q(\bar\varphi+\nu\bar u)(u-\bar u)\dx\dt\geq 0\ \forall u\in U_{\alpha,\beta}.
\end{equation}
\end{theorem}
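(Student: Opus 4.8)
The plan is to recast \Pb as an abstract cone-constrained minimization problem and to invoke the generalized Karush--Kuhn--Tucker theorem of \cite[Theorem 5.2]{Casas1993}. First I would describe the feasible set as follows: minimize $J$ over the convex set $U_{\alpha,\beta}$ subject to $G(u)-\gamma\in K$, where $G(u)=y_u$ is the control-to-state mapping and $K=\{z\in C(\bar Q):z(x,t)\le 0\ \forall(x,t)\in\bar Q\}$ is the closed convex cone of nonpositive continuous functions. Theorem \ref{T2.1} ensures that $G$ maps $L^p(Q)$ continuously into $C(\bar Q)$, and Theorem \ref{T2.2} gives $G\in C^2$ with $G'(\bar u)v=z_{\bar u,v}$; together with $J\in C^1$ (Corollary \ref{C2.6}) and the closedness and convexity of $U_{\alpha,\beta}$ (bounded in $L^\infty(Q)$ when $n=2,3$), this places the problem in the setting of the abstract theorem.

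Next I would show that the linearized Slater condition \eqref{E13} supplies the regularity (Robinson/Zowe--Kurcyusz) hypothesis required to produce nonsingular multipliers. Rewriting \eqref{E13} as $G(\bar u)+G'(\bar u)(u_0-\bar u)<\gamma$ uniformly on $\bar Q$, the direction $u_0-\bar u\in U_{\alpha,\beta}-\bar u$ carries the linearized constraint strictly into the interior of $K$, which is exactly the qualification needed. The abstract theorem then yields a nonnegative $\bar\mu\in\mathcal{M}(\bar Q)=C(\bar Q)^*$ satisfying the complementary slackness relation $\int_{\bar Q}(y_{\bar u}-\gamma)\,\mathrm{d}\bar\mu=0$ and the stationarity inequality
\[
J'(\bar u)(u-\bar u)+\int_{\bar Q} z_{\bar u,u-\bar u}\,\mathrm{d}\bar\mu\ge 0\qquad\forall\,u\in U_{\alpha,\beta}.
\]
Since $\bar\mu\ge 0$ and $y_{\bar u}-\gamma\le 0$, complementary slackness forces $\supp\bar\mu\subset\{y_{\bar u}=\gamma\}$.

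To reach the stated conclusions I would then localize and decompose $\bar\mu$. Because $\gamma>0$ while $\bar y=0$ on $\Sigma$, and because $y_0<\gamma$ on $\bar\Omega$ by (A4) so that $\bar y<\gamma$ in a neighbourhood of $\Omega\times\{0\}$ by continuity, the active set meets neither the lateral boundary nor the initial face; hence $\supp\bar\mu\subset\Omega\times(0,T]$. Writing $\bar\mu_Q$ and $\bar\mu_\Omega$ for the restrictions of $\bar\mu$ to $Q$ and to the terminal face $\Omega\times\{T\}$ (the latter identified with a measure on $\Omega$) gives nonnegative measures satisfying \eqref{E16}. Defining $\bar\varphi$ as the solution of \eqref{E15} in the low-regularity class of Lemmas \ref{L2.4}, \ref{L2.5} and \ref{L2.8}, the left-hand side of the stationarity inequality is precisely $\frac{\partial\mathcal{L}}{\partial u}(\bar u,\bar\mu_Q,\bar\mu_\Omega)(u-\bar u)$, whose explicit form in Corollary \ref{C2.7} collapses the inequality to \eqref{E17}; equation \eqref{E14} holds by definition of $\bar y=y_{\bar u}$.

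The main obstacle I anticipate lies not in the abstract multiplier rule but in the two parabolic-specific translation steps. The first is verifying carefully that \eqref{E13} really yields the abstract regularity condition, i.e. that the closed range/surjectivity requirement of \cite[Theorem 5.2]{Casas1993} holds modulo the tangent cone of $U_{\alpha,\beta}$ at $\bar u$. The second, and more delicate, is the decomposition of the single multiplier $\bar\mu\in\mathcal{M}(\bar Q)$ into the interior part $\bar\mu_Q$ and the terminal-time part $\bar\mu_\Omega$, together with its consistency with the way \eqref{E15} encodes $\bar\mu_Q$ as a source and $\bar\mu_\Omega$ as terminal datum; this hinges on the transposition identity \eqref{E08}, which underlies Corollary \ref{C2.7}, and requires the low time-regularity of $\bar\varphi$ to be compatible with the terminal pairing against $\bar\mu_\Omega$.
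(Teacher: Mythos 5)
Your proposal is correct and follows essentially the same route as the paper, which proves Theorem \ref{T3.1} simply by invoking the abstract KKT theorem of \cite[Theorem 5.2]{Casas1993} (see also \cite{Casas-1997}): the Slater condition \eqref{E13} as constraint qualification, a nonnegative multiplier in $C(\bar Q)^*$ with complementary slackness, and the splitting of that measure into its interior part $\bar\mu_Q$ and terminal part $\bar\mu_\Omega$ (possible since (A4) and $\gamma>0$ keep the active set away from $\Sigma$ and $\bar\Omega\times\{0\}$), followed by the transposition identity \eqref{E08} to obtain \eqref{E15}--\eqref{E17}. Your write-up just makes explicit the steps the paper leaves to the cited references.
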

 Since $\nu>0$, it is well known that \eqref{E17} is equivalent to the pointwise projection formula
\[\bar u(x) = \mathrm{proj}_{[\alpha,\beta]}\left(-\frac{1}{\nu}\bar\varphi(x)\right).\]

\section{Sufficient optimality conditions}\label{S4}
For a given triplet $(\bar u,\bar \mu_Q,\bar \mu_\Omega)\in\uad\times\mathcal{M}_Q\times\mathcal{M}_\Omega$ and any $\tau\geq 0$, we define the cone

\begin{align*}
C^\tau_{\bar u,\bar \mu} = \{ v\in L^p(Q):&
\ \frac{\partial\mathcal{L}}{\partial u}(\bar u,\bar\mu_Q,\bar\mu_\Omega)v\leq \tau \|v\|_{L^p(Q)},\\
& v(x,t)\geq 0\text{ if }\bar u(x,t)=\alpha,\ v(x,t)\leq 0\text{ if }\bar u(x,t)=\beta,\\
& z_v(x,t) \leq\tau\|v\|_{L^p(Q)}\text{ if }\bar y(x,t)=\gamma\text{ and }\\
& \int_Q z_v \mathrm{d}\bar\mu_Q+\int_\Omega z_v(\cdot,T) \mathrm{d}\bar\mu_\Omega\geq -\tau \|v\|_{L^p(Q)}\}.
\end{align*}

This cone is an extension of the following one, introduced in \cite{Casas-delosReyes-Troltz-2008-SIOPT}:

\begin{align*}
C_{\bar u,\bar \mu} = \{ v\in L^p(Q):&
\ \frac{\partial\mathcal{L}}{\partial u}(\bar u,\bar\mu_Q,\bar\mu_\Omega)v=0,\\
& v(x,t)\geq 0\text{ if }\bar u(x,t)=\alpha,\ v(x,t)\leq 0\text{ if }\bar u(x,t)=\beta,\\
& z_v(x,t) \leq 0\text{ if }\bar y(x,t)=\gamma\text{ and }\\
& \int_Q z_v \mathrm{d}\bar\mu_Q+\int_\Omega z_v(\cdot,T) \mathrm{d}\bar\mu_\Omega = 0\}.
\end{align*}
Indeed, if  $(\bar u,\bar \mu_Q,\bar \mu_\Omega)\in\uad\times\mathcal{M}_Q\times\mathcal{M}_\Omega$ is a triplet satisfying first order optimality conditions, we have that
\[C_{\bar u,\bar \mu} = C^0_{\bar u,\bar \mu} \subset C^\tau_{\bar u,\bar \mu}\ \forall \tau >0.\]
In  \cite{Dunn98} it was proved that the second order condition based on the critical cone of the type $C_{\bar u,\bar\mu}$ is not enough for a sufficient optimality conditions in some cases. Hence an extension of the cone was suggested.

Finally, we state and prove the main result of the paper.

\begin{theorem}\label{T4.1}
  Let $(\bar u,\bar \mu_Q,\bar \mu_\Omega)\in\uad\times\mathcal{M}_Q\times\mathcal{M}_\Omega$ be a triplet satisfying the first order optimality conditions \eqref{E14}--\eqref{E17} and suppose that there exist $\tau> 0$ and $\delta >0$ such that

  \begin{equation}\label{E18}
  \begin{array}{ll}
 \displaystyle\frac{\partial^2\mathcal{L}}{\partial u^2}(\bar u,\bar \mu_Q,\bar \mu_\Omega)v^2 > 0\ \forall v\in C_{\bar u,\bar \mu} \setminus\{0\}&\text{ if } n =1,\\ \\
  \displaystyle\frac{\partial^2\mathcal{L}}{\partial u^2}(\bar u,\bar \mu_Q,\bar \mu_\Omega)v^2\geq \delta \|v\|^2_{L^2(Q)}\ \forall v\in C^\tau_{\bar u,\bar \mu}&\text{ if } n = 2\text{ or }n=3.
  \end{array}
  \end{equation}
  Then, there exist $\varepsilon >0$ and $\kappa >0$ such that
  \[J(\bar u) +\frac{\kappa}{2}\|u-\bar u\|^2_{L^2(Q)}\leq J(u)\ \forall u\in\uad\text{ such that }\|u-\bar u\|_{L^2(Q)} \leq \varepsilon.\]
\end{theorem}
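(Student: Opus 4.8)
The plan is to argue by contradiction, exploiting the Lagrangian to turn the claimed quadratic growth into a statement about the second variation. Suppose the conclusion fails. Then for each $k$ there is $u_k\in\uad$ with $\rho_k:=\|u_k-\bar u\|_{L^2(Q)}\to 0$ and $J(u_k)<J(\bar u)+\frac{1}{2k}\rho_k^2$; set $v_k=(u_k-\bar u)/\rho_k$, so $\|v_k\|_{L^2(Q)}=1$. Since the multipliers are nonnegative and $u_k$ is feasible ($y_{u_k}\le\gamma$), one has $\mathcal{L}(u_k,\bar\mu_Q,\bar\mu_\Omega)\le J(u_k)$, while the complementarity relations \eqref{E16} give $\mathcal{L}(\bar u,\bar\mu_Q,\bar\mu_\Omega)=J(\bar u)$. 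Expanding $\mathcal{L}(\cdot,\bar\mu_Q,\bar\mu_\Omega)$ to second order about $\bar u$ at an intermediate point $\hat u_k=\bar u+\theta_k(u_k-\bar u)$ and using $\frac{\partial\mathcal{L}}{\partial u}(\bar u,\bar\mu_Q,\bar\mu_\Omega)(u_k-\bar u)\ge 0$ (this is \eqref{E17} via Corollary \ref{C2.7}, as $u_k\in U_{\alpha,\beta}$), I obtain
\[\frac{\partial^2\mathcal{L}}{\partial u^2}(\hat u_k,\bar\mu_Q,\bar\mu_\Omega)v_k^2<\frac1k.\]

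Next I would transfer the second variation back to $\bar u$. In all cases $\|y_{\hat u_k}-\bar y\|_{C(\bar Q)}\to 0$ (by Theorem \ref{T2.1} for $n=2,3$, where $\uad$ is $L^\infty$-bounded so $\hat u_k\to\bar u$ in $L^p(Q)$; by the $L^2(Q)$-to-$C(\bar Q)$ continuity of the state map for $n=1$). Applying Theorem \ref{T2.9} with any $\varepsilon>0$ shows that, for $k$ large, the value at $\hat u_k$ equals the value at $\bar u$ up to $\varepsilon\|v_k\|_{L^2(Q)}^2=\varepsilon$, whence $\limsup_k \frac{\partial^2\mathcal{L}}{\partial u^2}(\bar u,\bar\mu_Q,\bar\mu_\Omega)v_k^2\le\varepsilon$ for every $\varepsilon>0$. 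I would then split according to the dimension.

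For $n=2$ or $3$, fix $\varepsilon=\delta/2$ and show $v_k\in C^\tau_{\bar u,\bar \mu}$ for $k$ large. The sign conditions are immediate from $u_k\in\uad$. The comparison $\|v_k\|_{L^p(Q)}\ge |Q|^{-(1/2-1/p)}\|v_k\|_{L^2(Q)}=:c>0$, combined with $\|u_k-\bar u\|_{L^p(Q)}\to 0$, makes the remaining conditions hold: from the Taylor inequality $\frac{\partial\mathcal{L}}{\partial u}(\bar u,\bar\mu_Q,\bar\mu_\Omega)v_k=O(\rho_k)\le\tau\|v_k\|_{L^p(Q)}$; the measure functional equals $\frac{\partial\mathcal{L}}{\partial u}(\bar u,\bar\mu_Q,\bar\mu_\Omega)v_k-J'(\bar u)v_k\ge -J'(\bar u)v_k=-O(\rho_k)\ge-\tau\|v_k\|_{L^p(Q)}$; and the state-constraint condition follows from feasibility $y_{u_k}\le\gamma$ together with the second order remainder estimate $\|y_{u_k}-\bar y-z_{\bar u,u_k-\bar u}\|_{C(\bar Q)}\le C\|u_k-\bar u\|_{L^p(Q)}^2$ (Theorem \ref{T2.2}), giving $z_{v_k}\le C\|u_k-\bar u\|_{L^p(Q)}^2/\rho_k\le\tau\|v_k\|_{L^p(Q)}$ on $\{\bar y=\gamma\}$ once $C\|u_k-\bar u\|_{L^p(Q)}\le\tau$. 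The coercivity in \eqref{E18} then forces $\frac{\partial^2\mathcal{L}}{\partial u^2}(\bar u,\bar\mu_Q,\bar\mu_\Omega)v_k^2\ge\delta$, contradicting $\limsup_k(\cdots)\le\delta/2$. I expect this verification of $v_k\in C^\tau_{\bar u,\bar \mu}$, specifically the state-constraint inequality, to be the main obstacle: it is exactly the two-norm discrepancy, and it is what makes the extension parameter $\tau>0$ indispensable (the non-extended cone $C_{\bar u,\bar \mu}$ would not contain $v_k$).

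For $n=1$ I would instead pass to a weak limit. Along a subsequence $v_k\rightharpoonup v$ in $L^2(Q)$, and by compactness of $w\mapsto z_{\bar u,w}$ one has $z_{v_k}\to z_v$ strongly (Lemma \ref{L2.3}). Passing to the limit in the convex closed sign constraints, in $z_{v_k}\le C\rho_k$ on $\{\bar y=\gamma\}$, and in $\frac{\partial\mathcal{L}}{\partial u}(\bar u,\bar\mu_Q,\bar\mu_\Omega)v_k\to 0$ and the measure functional $\to 0$ (both squeezed between $O(\rho_k)$ bounds), yields $v\in C_{\bar u,\bar \mu}$. Finally, since $\nu>0$, $z_{v_k}\to z_v$ strongly and $\|v_k\|_{L^2(Q)}=1$,
\[\lim_{k\to\infty}\frac{\partial^2\mathcal{L}}{\partial u^2}(\bar u,\bar\mu_Q,\bar\mu_\Omega)v_k^2=\int_Q\Big(\frac{\partial^2 L}{\partial y^2}(x,t,\bar y)-\frac{\partial^2 f}{\partial y^2}(x,t,\bar y)\bar\varphi\Big)z_v^2\dx\dt+\nu,\]
which equals $\frac{\partial^2\mathcal{L}}{\partial u^2}(\bar u,\bar\mu_Q,\bar\mu_\Omega)v^2+\nu(1-\|v\|_{L^2(Q)}^2)$. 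If $v\ne 0$ this is strictly positive by \eqref{E18}; if $v=0$ it equals $\nu>0$. Either way this contradicts $\limsup_k\frac{\partial^2\mathcal{L}}{\partial u^2}(\bar u,\bar\mu_Q,\bar\mu_\Omega)v_k^2\le 0$, which completes the proof.
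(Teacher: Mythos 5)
Your proposal is correct, but it takes a genuinely different route from the paper's, most visibly for $n=2,3$. The paper normalizes by $\rho_k=\|u_k-\bar u\|_{L^p(Q)}$, extracts a weak limit $v_k\rightharpoonup v$ in $L^p(Q)$, proves $v\in C_{\bar u,\bar\mu}$, combines \eqref{E18} with Theorem \ref{T2.9} to conclude $v=0$, and only then verifies $v_k\in C^\tau_{\bar u,\bar\mu}$ for large $k$: with $\|v_k\|_{L^p(Q)}=1$, all the $\tau$-inequalities follow softly because $\frac{\partial\mathcal{L}}{\partial u}(\bar u,\bar\mu_Q,\bar\mu_\Omega)v_k\to 0$ and $z_{v_k}\to 0$ in $C(\bar Q)$ by compactness of the linearized state map under $v_k\rightharpoonup 0$. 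You instead normalize by $\rho_k=\|u_k-\bar u\|_{L^2(Q)}$ and, for $n=2,3$, dispense with weak limits (and with the non-extended cone $C_{\bar u,\bar\mu}$) entirely: membership $v_k\in C^\tau_{\bar u,\bar\mu}$ comes from quantitative estimates --- the $O(\rho_k)$ Taylor bounds on $\frac{\partial\mathcal{L}}{\partial u}(\bar u,\bar\mu_Q,\bar\mu_\Omega)v_k$ and $J'(\bar u)v_k$, and the second-order remainder bound $\|y_{u_k}-\bar y-z_{\bar u,u_k-\bar u}\|_{C(\bar Q)}\le C\|u_k-\bar u\|^2_{L^p(Q)}$ combined with feasibility --- measured against $\|v_k\|_{L^p(Q)}\ge |Q|^{1/p-1/2}>0$. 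This is valid and arguably more transparent: it exhibits exactly how the two-norm discrepancy enters ($\|u_k-\bar u\|_{L^p(Q)}$ tends to zero but is not comparable to $\rho_k$) and why $\tau>0$ is indispensable. Its price is that it needs two uniform estimates along the segments, namely the quadratic remainder bound for $G$ and $|\frac{\partial^2\mathcal{L}}{\partial u^2}(u,\bar\mu_Q,\bar\mu_\Omega)v^2|\le C\|v\|^2_{L^2(Q)}$; neither is stated as such in the paper, but both follow from Theorem \ref{T2.2}, Lemma \ref{L2.3} and Lemma \ref{L2.8} exactly as in the proof of Theorem \ref{T2.9}, so they are not gaps. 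For $n=1$ your argument is the paper's Step 3 in different packaging: both rest on the uniform convergence $z_{v_k}\to z_v$ and the survival of the term $\nu\|v_k\|^2_{L^2(Q)}=\nu>0$ in the limit; your dichotomy $v\neq 0$ versus $v=0$ replaces the paper's ``first show $v=0$, then contradict the normalization.'' Two cosmetic points: the implication $v_k\rightharpoonup v\Rightarrow z_{v_k}\to z_v$ in $C(\bar Q)$ is not Lemma \ref{L2.3} (which only gives bounds) but the compactness argument used in Condition 3 of the paper's Step 1; and, like the paper, you apply Theorem \ref{T2.9} at intermediate points $\hat u_k\in U_{\alpha,\beta}$ that need not belong to $\uad$, which is harmless since its proof uses only the uniform state bounds.
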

\begin{proof}
  We will proceed by contradiction. Suppose that the statement is false. Then, there exists a sequence $\{u_k\}_{k\geq 1} \subset \uad$ such that
\begin{equation}\label{E19}
  \|u_k-\bar u\|_{L^2(Q)}<\frac{1}{k}\quad \text{and} \quad J(u_k) < J(\bar u) + \frac{1}{2k}\|u_k-\bar u\|_{L^2(Q)}^2\quad \forall k \ge 1.
  \end{equation}
Taking into account Assumption (A5) and noting that $\uad\subset U_{\alpha,\beta} \subset L^\infty(\Omega)$ if $n=2$ or $n=3$, we can define for every $k$
  \[\rho_k = \|u_k-\bar u\|_{L^p(Q)}\text{ and }v_k = \frac{1}{\rho_k}(u_k-\bar u).\]
  It is clear that $\|v_k\|_{L^p(Q)} = 1$ for all $k$ and hence we can extract a subsequence, denoted in the same way, such that $v_k\rightharpoonup v$ in $L^p(Q)$. The proof is split in five steps.

  \underline{\em Step 1:} $v\in C_{\bar u,\bar \mu}$. Let us check that $v$ satisfies the four conditions that characterize the functions in the cone.

  {\em Condition 1.}  We have that $\bar y(x,t)=\gamma$ in $\supp\bar\mu_Q \cup\supp\bar\mu_\Omega$ and, since $u_k$ is admissible, $y_{u_k}(x,t)\leq \gamma$  $\forall (x,t)$ in $\bar Q$, so
  \begin{align*}
  \int_Q y_{u_k}\mathrm{d}\bar\mu_Q +\int_\Omega y_{u_k}\mathrm{d}\bar\mu_\Omega \leq
  \int_Q \gamma\mathrm{d}\bar\mu_Q +\int_\Omega \gamma\mathrm{d}\bar\mu_\Omega
  =  \int_Q \bar y\mathrm{d}\bar\mu_Q +\int_\Omega \bar y\mathrm{d}\bar\mu_\Omega.
  \end{align*}
Using \eqref{E19}, we deduce
  \[
  \mathcal{L}(u_k,\bar\mu_Q,\bar\mu_\Omega) < \mathcal{L}(\bar u,\bar\mu_Q,\bar\mu_\Omega) + \frac{1}{2k}\|u_k-\bar u\|_{L^2(Q)}^2.
  \]
  By the Mean Value Theorem, for every $k$ we can find a measurable function $\theta_k$ with $0\leq\theta_k(x,t)\leq1$ such that, denoting $u_{\theta_k} = u_k+\theta_k(u_k-\bar u)$, we get
  \begin{equation}
  \label{E20}\partial_u \mathcal{L}(u_{\theta_k},\bar\mu_Q,\bar\mu_\Omega)(u_k-\bar u)<\frac{1}{2k}\|u_k-\bar u\|_{L^2(Q)}^2.
  \end{equation}
  With H\"older's inequality we infer
  \[\|u_k-\bar u\|_{L^2(Q)}^2\leq \rho_k^2|Q|^{\frac{p-2}{p}}.\]
  Thus, dividing  inequality \eqref{E20} by $\rho_k$ we obtain
  \[
  \partial_u \mathcal{L}(u_{\theta_k},\bar\mu_Q,\bar\mu_\Omega)v_k<\frac{1}{2k}\frac{\|u_k-\bar u\|_{L^2(Q)}^2}{\rho_k} \leq
  \frac{1}{2k}|Q|^{\frac{p-2}{p}}\rho_k.
  \]
  On the right hand side, we have that $\rho_k\leq(\beta-\alpha)^{\frac{p-2}{p}}\|u_k-\bar u\|_{L^2(Q)}^{2/p}\to 0$. To pass to the limit in the left hand side, we first notice that $u_{\theta_k}\to \bar u$  and $v_k\rightharpoonup v$ in $L^p(Q) \subset L^2(Q)$. Therefore
  \[\int_Q\nu u_{\theta_k}v_k\dx\dt\to\int_Q\nu\bar u v\dx\dt.\]
  Next, using Theorem \ref{T2.1} and Assumption (A3), we deduce from the convergence $u_{\theta_k}\to \bar u$ in $L^p(Q)$ that $y_{u_{\theta_k}}\to \bar y$ in $C(\bar Q)$. From this convergence, Theorem \ref{T2.1}, Assumption (A3), and Lemma \ref{L2.5} we infer that $\varphi_{u_{\theta_k},\bar\mu}\to \bar \varphi$ in $L^q(Q)$ for all $q<(n+2)/n$. From the conditions imposed on the choice of the exponent $p$ in Assumption (A5), we have in particular that $\varphi_{u_{\theta_k}}\to \bar \varphi$ in $L^{p'}(Q)$, which implies
  \[\int_Q \varphi_{u_{\theta_k},\bar\mu} v_k\dx\dt \to \int_Q \bar \varphi v\dx\dt.\]
  Therefore, we deduce that
   \[\partial_u \mathcal{L}(\bar u,\bar\mu_Q,\bar\mu_\Omega)v\leq 0.\]
   Since $u_k\in U_{\alpha,\beta}$, from the first order optimality condition \eqref{E17} and the expression for the derivative of $\mathcal{L}$ given in Corollary \ref{C2.7}, we have that $\displaystyle\frac{\partial \mathcal{L}}{\partial u}(\bar u,\bar\mu_Q,\bar\mu_\Omega)v_k\geq 0$ for all $k$, and hence
  \begin{equation}\label{E21}\displaystyle\frac{\partial \mathcal{L}}{\partial u}(\bar u,\bar\mu_Q,\bar\mu_\Omega)v\geq 0,\end{equation}
so we have that the first condition is satisfied.

  {\em Condition 2.}  Since the $v_k$ satisfy the sign conditions, taking weak limits and noting that the set of $L^p(\Omega)$-functions satisfying the sign conditions is a closed and convex set, we have that so does $v$ satisfy the sign conditions.

  {\em Condition 3.} If $\bar y(x,t) = \gamma$, due to the admissibility of $u_k$, we have that
  \[\zeta_k(x,t) = \frac{y_{u_k}(x,t)-\bar y(x,t)}{\rho_k}\leq 0.\]
  The function $\zeta_k$ satisfies the equation
\[\left\{\begin{array}{rcll}
\displaystyle\frac{\partial\zeta_k}{\partial t} + A \zeta_k + \displaystyle\frac{\partial f}{\partial y}(x,t, y_{\theta'_k})\zeta_k &=& v_k& \mbox{ in }Q,\\
 \zeta_k& =& 0&\mbox{ on }\Sigma,\\
 \zeta_k & = & 0&\mbox{ in }\Omega,
  \end{array}
  \right.
\]
where $y_{\theta'_k} =  y_{u_k}+\theta'_k\cdot(\bar y-y_{u_k})$ and $\theta'_k$ is a measurable function such that $0\leq\theta'_k\leq 1$ a.e. in $Q$.
The weak convergence $v_k\rightharpoonup v$ in $L^p(Q)$ and the strong convergence $y_{\theta'_k} \to \bar y$ in $C(\bar Q)$ imply the strong convergence $\zeta_k\to z_v$ in $C(\bar Q)$, so we have that $z_v(x,t)\leq 0$ for a.a. $(x,t)\in Q$ such that $\bar y(x,t) = \gamma$.

{\em Condition 4.} From \eqref{E21}
we deduce
    \[\displaystyle\frac{\partial \mathcal{L}}{\partial u}(\bar u,\bar\mu_Q,\bar\mu_\Omega)v = J'(\bar u)v+\int_Q z_v\mathrm{d}\bar\mu_Q +\int_\Omega z_v\mathrm{d}\bar\mu_\Omega  \ge  0\]
Using \eqref{E19} and proceeding as for Condition 1, we have that $J'(\bar u)v\leq 0$. Since $z_v\leq 0$ in $\supp\bar\mu_Q\cup\supp\bar\mu_\Omega$,  the above inequality is possible only if $J'(\bar u)v = 0$ and
\[\int_Q z_v\mathrm{d}\bar\mu_Q +\int_\Omega z_v\mathrm{d}\bar\mu_\Omega  =  0.\]

\underline{\em Step 2:} Let us see that, indeed, $v=0$.

Using \eqref{E19} and performing a second order Taylor expansion, we deduce that for all $k$, there exists a measurable function $\theta_k''$ with $0\leq\theta_k''\leq 1$  such that
\[\rho_k\frac{\partial\mathcal{L}}{\partial u}(\bar u, \bar\mu_Q,\bar\mu_\Omega)v_k +
\frac{\rho_k^2}{2}\frac{\partial^2\mathcal{L}}{\partial u^2}(u_{\theta_k''}, \bar\mu_Q,\bar\mu_\Omega)v_k^2
\leq \frac{1}{2k}\|u_k-\bar u\|_{L^2(Q)}^2,\]
where $u_{\theta_k''} = u_k+\theta_k''(\bar u-u_k)$. Using that ${\partial_u \mathcal L}(\bar u, \bar\mu_Q,\bar\mu_\Omega)v_k\geq 0$ and dividing by $\rho_k^2/2$, we obtain
\begin{equation}\label{E22}
\frac{\partial^2\mathcal{L}}{\partial u^2}(u_{\theta_k''}, \bar\mu_Q,\bar\mu_\Omega)v_k^2 \leq \frac{1}{k}\|v_k\|_{L^2(Q)}^2.
\end{equation}
Using this estimate we are going to prove that $\frac{\partial^2\mathcal{L}}{\partial u^2}(\bar u, \bar\mu_Q,\bar\mu_\Omega)v^2 \leq 0$. This inequality, the fact that $v\in C_{\bar u, \bar\mu}\subset C^\tau_{\bar u, \bar\mu}$, and condition \eqref{E18} imply that $v=0$. To get the desired inequality we proceed as follows
\begin{align}
  \frac{\partial^2\mathcal{L}}{\partial u^2}(\bar u, \bar\mu_Q,\bar\mu_\Omega)v^2  \leq &
  \liminf_{k\to\infty} \frac{\partial^2\mathcal{L}}{\partial u^2}(\bar u, \bar\mu_Q,\bar\mu_\Omega)v_k^2  \nonumber \\
  \leq &\limsup_{k\to\infty} \frac{\partial^2\mathcal{L}}{\partial u^2}(u_{\theta_k''}, \bar\mu_Q,\bar\mu_\Omega)v_k^2 \label{MyE23}\\
  & +
  \lim_{k\to \infty}
\left(  \frac{\partial^2\mathcal{L}}{\partial u^2}(\bar u, \bar\mu_Q,\bar\mu_\Omega)v_k^2-
  \frac{\partial^2\mathcal{L}}{\partial u^2}(u_{\theta_k''}, \bar\mu_Q,\bar\mu_\Omega)v_k^2\right)  \nonumber
\end{align}
Since $\|v_k\|_{L^2(Q)}^2 \le |Q|^{\frac{p-2}{p}}\|v_k\|^2_{L^p(Q)} = |Q|^{\frac{p-2}{p}}$, we conclude from
\eqref{E22} that
\begin{equation}
\label{MyE24}
\limsup_{k\to\infty} \frac{\partial^2\mathcal{L}}{\partial u^2}(u_{\theta_k''}, \bar\mu_Q,\bar\mu_\Omega)v_k^2 \leq
\lim_{k\to\infty}\frac{1}{k}|Q|^{\frac{p-2}{p}} = 0.
\end{equation}
The strong convergence  $u_{\theta''_k}\to \bar u$ in $L^p(Q)$ and the last statement in Theorem \ref{T2.1} imply the convergence of $y_{u_{\theta''_k}}$ to $\bar y$ in $C(\bar Q)$. Using this fact, we can deduce from Theorem \ref{T2.9}, that for every $\varepsilon >0$ there exists $k_\varepsilon>0$ such that
\begin{equation}\label{MyE25}
\left|\frac{\partial^2\mathcal{L}}{\partial u^2}(\bar u, \bar\mu_Q,\bar\mu_\Omega)v_k^2-\frac{\partial^2\mathcal{L}}{\partial u^2}( u_{\theta''_k}, \bar\mu_Q,\bar\mu_\Omega)v_k^2\right|\leq \varepsilon \|v_k\|_{L^2(Q)}^2\quad \forall k\geq k_\varepsilon.
\end{equation}
Using again that $\|v_k\|_{L^2(Q)}^2\leq |Q|^{\frac{p-2}{p}}$ the above inequality implies that the last limit in \eqref{MyE23} is $0$, and gathering \eqref{E22}, \eqref{MyE23} and \eqref{MyE24} we conclude that $\frac{\partial^2\mathcal{L}}{\partial u^2}(\bar u, \bar\mu_Q,\bar\mu_\Omega)v^2
\leq 0$.

\underline{\em Step 3:} Achieving a contradiction in the case $n =1$.

From Theorem \ref{T2.1} and Assumption (A3), we notice that the weak convergence  $v_k\rightharpoonup 0$ in $L^p(Q)$ implies that $z_{v_k}\to 0$ in $C(\bar Q)$. Therefore, from the expression of the second derivative of the Lagrangian,
\begin{align*}
\frac{\partial^2\mathcal{L}}{\partial u^2}&(u_{\theta_k''}, \bar\mu_Q,\bar\mu_\Omega)v_k^2\\& =
\int_Q\left(\frac{\partial^2 L}{\partial y^2}(x,t,y_{u_{\theta_k''}})-\frac{\partial^2 f}{\partial y^2}(x,t,y_{u_{\theta_k''}})\varphi_{u_{\theta_k''}}\right)z_{v_k}^2\dx\dt+ \nu \|v_k\|_{L^2(Q)}^2,
\end{align*}
we can deduce that $\|v_k\|_{L^2(Q)}\to 0$. In the case $n=1$, we have that $p=2$; see Assumption (A5). So this is in contradiction with the fact that $\|v_k\|_{L^p(Q)}=1$, and the proof is complete in this case.

\underline{\em Step 4:} Let us check that there exists $k_0>0$ such that $v_k\in C^\tau_{\bar u, \bar\mu}$ for all $k\geq k_0$.

Since $\displaystyle\frac{\partial \mathcal{L}}{\partial u}(\bar u, \bar\mu_Q,\bar\mu_\Omega)v_k\to 0$ and $\|v_k\|_{L^p(Q)} = 1$, it is clear that for $k$ big enough,
\[\displaystyle\frac{\partial \mathcal{L}}{\partial u}(\bar u, \bar\mu_Q,\bar\mu_\Omega)v_k\leq \tau =  \tau\|v_k\|_{L^p(Q)}.\]
Moreover, for every $k$ the function $v_k$ satisfies trivially the sign condition imposed in the definition of $C^\tau_{\bar u,\bar\mu}$ due to $u_k\in U_{\alpha,\beta}$.
Finally, since $z_{v_k}\to 0$ in $C(\bar Q)$, we have that
\[\int_Q z_{v_k}\mathrm{d}\bar\mu_Q + \int_\Omega z_{v_k}\mathrm{d}\bar\mu_\Omega\to 0.\]
Hence, for $k$ big enough, it is clear that both $z_{v_k}(x,t) \leq \tau = \tau\|v_k\|_{L^p(Q)}$ if $\bar y(x,t)=\gamma$ and
\[
\int_Q z_{v_k}\mathrm{d}\bar\mu_Q + \int_\Omega z_{v_k}\mathrm{d}\bar\mu_\Omega\geq -\tau = -\tau\|v_k\|_{L^p(Q)}.
\]
The above arguments prove the existence of $k_0$ such that $v_k\in C^\tau_{\bar u, \bar\mu}$ for all $k\geq k_0$.

\underline{\em Step 5:} Achieving a contradiction  if $n=2$ or $n=3$.

Since $v_k\in C^\tau_{\bar u, \bar\mu}$ for $k\geq k_0$,  the second order optimality condition \eqref{E18} implies
\[
\frac{\partial^2\mathcal{L}}{\partial u^2}(\bar u, \bar\mu_Q,\bar\mu_\Omega)v_k^2\geq \delta\|v_k\|_{L^2(Q)}^2\quad \forall k \ge k_0.
\]
Choosing $\varepsilon = \delta/2$ in \eqref{MyE25}, we infer from the previous inequality that
\[\frac{\partial^2\mathcal{L}}{\partial u^2}(u_{\theta''_k}, \bar\mu_Q,\bar\mu_\Omega)v_k^2\geq \frac{\delta}{2}\|v_k\|_{L^2(Q)}^2\quad \forall k\geq \max\{k_0,k_{\delta/2}\}.\]
Finally, combining \eqref{E22} and the above inequality we deduce
\[
\frac{1}{k}\geq\frac{\delta}{2}\quad \forall k>\max\{k_0,k_{\delta/2}\},
\]
which is clearly false.
\end{proof}

\begin{remark}\label{R3}
Notice that through all the paper we have not used explicitly the assumption $\nu >0$ for the case $n > 1$. All the results in the paper would also be true if $\nu=0$ and $n > 1$, but in this case Theorem 4.1 can be vacuous. Indeed, it is shown at the end of Section 2 in \cite{Casas2012} that if $\nu=0$, then condition \eqref{E18} does not hold except maybe in exceptional cases.
\end{remark}

\begin{remark}\label{ZZR4}
In the case $n=1$, \eqref{E18}  is the same condition used in \cite[Theorem 7.5]{Casas-delosReyes-Troltz-2008-SIOPT} to obtain local optimality in the  $L^\infty(Q)$ sense.
We can also prove that \eqref{E18} is equivalent to the existence of $\tau>0$ and $\delta>0$ such that
\begin{equation}\label{E23} \displaystyle\frac{\partial^2\mathcal{L}}{\partial u^2}(\bar u,\bar \mu_Q,\bar \mu_\Omega)v^2\geq \delta \|v\|^2_{L^2(Q)}\ \forall v\in C^\tau_{\bar u,\bar \mu}.\end{equation}
It is obvious that \eqref{E23} implies \eqref{E18}. To see the other implication, suppose that \eqref{E23} is false. Then, for $k=1,2,\ldots$ there exists $v_k\in C^{1/k}_{\bar u,\bar \mu}$ with $\|v_k\|_{L^2(Q)}=1$ such that
\begin{equation}\label{E24}
  \displaystyle\frac{\partial^2\mathcal{L}}{\partial u^2}(\bar u,\bar \mu_Q,\bar \mu_\Omega)v_k^2\leq \frac{1}{k}.
\end{equation}
Since $\{v_k\}$ is bounded in $L^2(Q)$, there exists $v\in L^2(Q)$ such that $v_k\rightharpoonup v$ weakly in $L^2(Q)$. Using this weak convergence and the strong convergence of $z_{v_k}$ to $z_v$ in $C(\bar Q)$, it is immediate that $v\in C_{\bar u,\bar\mu}$. On the other hand, taking the lower limit in \eqref{E24} we obtain
\[\displaystyle\frac{\partial^2\mathcal{L}}{\partial u^2}(\bar u,\bar \mu_Q,\bar \mu_\Omega)v^2\leq 0,\]
so condition \eqref{E18} implies that $v=0$ and, consequently, $z_{v_k}\to 0$ in $C(\bar Q)$. Finally, using that $\|v_k\|_{L^2(Q)}=1$, we obtain
\begin{align*}
0\geq & \displaystyle\frac{\partial^2\mathcal{L}}{\partial u^2}(\bar u,\bar \mu_Q,\bar \mu_\Omega)v^2 = \lim_{k\to\infty}
\frac{\partial^2\mathcal{L}}{\partial u^2}(\bar u, \bar\mu_Q,\bar\mu_\Omega)v_k^2\\
 = & \lim_{k\to\infty}\Big\{
\int_Q\left(\frac{\partial^2 L}{\partial y^2}(x,t,\bar y)-\frac{\partial^2 f}{\partial y^2}(x,t,\bar y)\bar\varphi\right)z_{v_k}^2\dx\dt+ \nu \|v_k\|_{L^2(Q)}^2\Big\}= \nu
\end{align*}
which is a contradiction with the assumption $\nu >0$.
\end{remark}

\section{Bilateral constraints}\label{S5}
All the results of the paper apply with the usual changes if the set $\uad$ is replaced by
\[
\uad =\{u \in L^\infty(Q): \alpha \le u(x,t) \le \beta\ \text{ and }\gamma_{\min} \leq y_u(x,t)\leq\gamma_{\max} \text{ for a.a. } (x,t) \in Q\},
\]
where $\gamma_{\min} < 0 < \gamma_{\max}$. First order conditions read as follows.
\begin{theorem}\label{T5.1}
  Let $\bar u\in\uad$ be a local solution of \Pb.
  Assume that the following linearized Slater condition holds: there exists $u_0\in U_{\alpha,\beta}$ such that
   \[
      \gamma_{\min} < y_{\bar u}(x,t) + z_{\bar u, u_0-\bar u}(x,t) < \gamma_{\max}\qquad \forall (x,t)\in \bar Q.
   \]
  Then, there exist $\bar y \in  L^2(0,T;H^1_0(\Omega))\cap C(\bar Q)$, $\bar\varphi \in L^r(0,T;W^{1,s}(\Omega)) \cap L^q(Q)$ for all $r,s\in [1,2)$ with $2/r+n/s>n+1$ and all $q<(n+2)/n$, and measures $\bar\mu_Q\in\mathcal{M}(Q)$ and $\bar \mu_\Omega\in\mathcal{M}(\Omega)$ such that equations \eqref{E14} and \eqref{E15} are satisfied,
\begin{align*}
  \supp \bar\mu^+_Q\subset\{(x,t)\in Q:\ \bar y(x,t) = \gamma_{\max}\},\ &
    \supp \bar\mu^-_Q\subset\{(x,t)\in Q:\ \bar y(x,t) = \gamma_{\min}\},
  \\
  \supp \bar\mu^+_\Omega\subset\{x\in \Omega:\ \bar y(x,T) = \gamma_{\max}\},\ &
    \supp \bar\mu^-_\Omega\subset\{x\in \Omega:\ \bar y(x,T) = \gamma_{\min}\},
\end{align*}
where
$\bar \mu_Q = \bar \mu_Q^+-\bar\mu_Q^-$ and $\bar \mu_\Omega = \bar \mu_\Omega^+-\bar\mu_\Omega^-$ are the Jordan decompositions of $\bar\mu_Q$ and $\bar\mu_\Omega$, and
\[
  \int_Q(\bar\varphi+\nu\bar u)(u-\bar u)\dx\dt\geq 0\ \forall u\in U_{\alpha,\beta}.
\]
\end{theorem}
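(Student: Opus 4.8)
The plan is to mirror the derivation of the unilateral conditions in Theorem~\ref{T3.1}, recasting \Pb with bilateral state constraints as an abstract cone-constrained problem and invoking the Lagrange multiplier theorem of \cite[Theorem~5.2]{Casas1993}. I would take $J:L^p(Q)\to\mathbb{R}$ (of class $C^2$ by Corollary~\ref{C2.6}) as the objective, $U_{\alpha,\beta}$ as the convex control constraint, and the control-to-state map $G:L^p(Q)\to C(\bar Q)$, $G(u)=y_u$ (of class $C^2$ by Theorem~\ref{T2.2}), as the operator carrying the state constraint. The bilateral condition is written as $G(u)\in K$, where $K=\{y\in C(\bar Q):\gamma_{\min}\le y\le\gamma_{\max}\text{ on }\bar Q\}$ is closed, convex, and has nonempty interior in $C(\bar Q)$ (it contains the ball around $y\equiv0$, using $\gamma_{\min}<0<\gamma_{\max}$).

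The decisive ingredient is the constraint qualification. Since $z_{\bar u,u_0-\bar u}=G'(\bar u)(u_0-\bar u)$, the linearized Slater condition $\gamma_{\min}<y_{\bar u}+z_{\bar u,u_0-\bar u}<\gamma_{\max}$ on $\bar Q$ says exactly that $G(\bar u)+G'(\bar u)(u_0-\bar u)$ lies in the interior of $K$. This is the Robinson-type regularity condition under which the multiplier theorem applies, and it guarantees the existence of Lagrange multipliers in the dual of $C(\bar Q)$ associated with the two one-sided constraints $y_u\le\gamma_{\max}$ and $\gamma_{\min}\le y_u$.

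By the Riesz representation theorem these multipliers are nonnegative regular Borel measures $\bar\mu^+,\bar\mu^-$ on $\bar Q$, and complementary slackness gives $\supp\bar\mu^+\subset\{\bar y=\gamma_{\max}\}$ and $\supp\bar\mu^-\subset\{\bar y=\gamma_{\min}\}$. Because $\bar y=0$ on $\Sigma$ and $\gamma_{\min}<y_0<\gamma_{\max}$ on $\Omega\times\{0\}$ (the natural two-sided version of Assumption~(A4)), neither active set meets the parabolic boundary $\Sigma\cup(\Omega\times\{0\})$; hence each measure concentrates on $Q\cup(\Omega\times\{T\})$ and splits into a distributed part on the open cylinder $Q$ and a terminal part on $\Omega\times\{T\}\cong\Omega$. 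Assembling $\bar\mu_Q=\bar\mu^+_Q-\bar\mu^-_Q\in\mathcal{M}(Q)$ and $\bar\mu_\Omega=\bar\mu^+_\Omega-\bar\mu^-_\Omega\in\mathcal{M}(\Omega)$, and noting that $\gamma_{\min}<0<\gamma_{\max}$ forces the two active sets to be disjoint, these expressions are the Jordan decompositions with the claimed supports. The variational inequality then follows from the stationarity of the Lagrangian in $u$: using Corollary~\ref{C2.7} to identify $\partial_u\mathcal L(\bar u,\bar\mu_Q,\bar\mu_\Omega)v=\int_Q(\bar\varphi+\nu\bar u)v$, where $\bar\varphi=\varphi_{\bar u,\bar\mu}$ solves \eqref{E15}, one obtains $\int_Q(\bar\varphi+\nu\bar u)(u-\bar u)\ge0$ for all $u\in U_{\alpha,\beta}$.

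The main obstacle is realizing the abstract dual elements as a genuine parabolic adjoint state with the asserted regularity $\bar\varphi\in L^r(0,T;W^{1,s}(\Omega))\cap L^q(Q)$, and checking that the transposition identity pairing $z_v$ against the measures is consistent with \eqref{E15}. This is exactly where Lemmas~\ref{L2.4} and \ref{L2.5} (well-posedness and regularity of \eqref{E07} with measure data, applied with $a_0=\tfrac{\partial f}{\partial y}(x,t,\bar y)\in L^\infty(Q)$) and Lemma~\ref{L2.8} enter: they let the abstract functional be represented through $\bar\varphi$ via the transposition formula \eqref{E08}, giving \eqref{E15}. Once this identification is secured, the remaining steps replicate the unilateral case of Theorem~\ref{T3.1}, the only change being that the single nonnegative measure is replaced by the signed measure produced by the two-sided constraint.
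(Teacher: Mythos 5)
Your proposal is correct and follows essentially the same route as the paper, which offers no separate proof of Theorem~\ref{T5.1} but obtains it ``with the usual changes'' from the unilateral case of Theorem~\ref{T3.1}, i.e.\ via the abstract multiplier theorem of \cite[Theorem 5.2]{Casas1993} under the linearized Slater (Robinson-type) condition, with the multiplier measure split into its interior and terminal parts and the adjoint state realized by transposition through Lemmas~\ref{L2.4}, \ref{L2.5} and \ref{L2.8}. The details you supply --- that $\gamma_{\min}<0<\gamma_{\max}$ together with the two-sided version of (A4) keeps the active sets away from $\Sigma\cup(\Omega\times\{0\})$, and that their disjointness makes $\bar\mu^{+}-\bar\mu^{-}$ mutually singular and hence the Jordan decomposition --- are precisely the ``usual changes'' the paper alludes to.
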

To formulate second order sufficient optimality conditions, the appropriate cone in this case is the following one:
\begin{align*}
C^\tau_{\bar u,\bar \mu} = \{ v\in L^p(Q):&
\ \frac{\partial\mathcal{L}}{\partial u}(\bar u,\bar\mu_Q,\bar\mu_\Omega)v\leq \tau \|v\|_{L^p(Q)},\\
& v(x,t)\geq 0\text{ if }\bar u(x,t)=\alpha,\ v(x,t)\leq 0\text{ if }\bar u(x,t)=\beta,\\
& z_v(x,t) \leq + \tau\|v\|_{L^p(Q)}\text{ if }\bar y(x,t)=\gamma_{\max},\\
& z_v(x,t) \geq - \tau\|v\|_{L^p(Q)}\text{ if }\bar y(x,t)=\gamma_{\min}\text{ and }\\
& \int_Q |z_v|\, \mathrm{d}|\bar\mu_Q|+\int_\Omega |z_v(\cdot,T)|\, \mathrm{d}|\bar\mu_\Omega|\leq \tau \|v\|_{L^p(Q)}\},
\end{align*}
where $|\bar \mu_Q|$ and $|\bar \mu_\Omega|$ denote the total variation of $\bar \mu_Q$ and $\bar \mu_\Omega$ respectively.

\section*{Funding}

The first and second authors were partially supported by MCIN/ AEI/10.13039/501100011033 under research project PID2020-114837GB-I00.


\end{document}